\newcommand{\Rr}{\mathbb{R}}
\newcommand{\Nn}{\mathbb{N}}
\newcommand{\Jj}{\mathbb{J}}
\newcommand{\id}{\operatorname{id}}
\newcommand{\te}[1]{\quad\text{#1}\quad}
\newcommand{\pde}[2]{\frac{\partial #1}{\partial #2}}
\newcommand{\norm}[2]{\left\| #1\right\|_{C^{#2}}}
\newcounter{main}
\newtheorem{theorem}{Theorem}[section]
\newtheorem{proposition}[theorem]{Proposition}
\newtheorem{lemma}[theorem]{Lemma}
\newtheorem{remark}{Remark}[section]
\newtheorem{maintheorem}{Theorem}
\newcommand{\blanksquare}{\,\,\,$\sqcup\!\!\!\!\sqcap$}
\newcounter{example}
{{\stepcounter{example}}{\flushleft {\bf Example \arabic{example}:}}}%
{\par}
\newcommand{\OO}{{\mathcal O}}
\title[Homoclinic tangencies in Hamiltonians]{Creation of homoclinic tangencies in Hamiltonians by the suspension of Poincar\'e sections}
\author[M. Bessa]{M\'{a}rio Bessa}
\address{Departamento de Matem\'atica, Universidade do Porto, 
Rua do Campo Alegre, 687, 4169-007 Porto, Portugal and ESTGOH-Instituto Polit\'ecnico de Coimbra, Rua General Santos Costa, 3400-124 Oliveira do Hospital, Portugal}
\email{bessa@fc.up.pt}
\author[J. Lopes Dias]{Jo\~{a}o Lopes Dias}
\address{Departamento de Matem\'atica and CEMAPRE, ISEG, 
Universidade T\'ecnica de Lisboa,
Rua do Quelhas 6, 1200-781 Lisboa, Portugal}
\email{jldias@iseg.utl.pt}
\begin{document}

\begin{abstract}
In this note we show that for any Hamiltonian defined on a symplectic 4-manifold $M$ and any point $p\in M$, there exists a $C^2$-close Hamiltonian  whose regular energy surface through $p$ is either Anosov or it contains a homoclinic tangency. Our result is based on a general construction of Hamiltonian suspensions for given symplectomorphisms on Poincar\'e sections already known to yield similar properties.
\end{abstract}

\date{\today}

\maketitle

\noindent\emph{MSC 2000:} Primary: 37J45, 37D05 ; Secondary: 37D20.\\
\emph{keywords:} Hamiltonian vector field, Anosov flow, elliptic point, homoclinic tangency.\\

%%%%%%%%%%%%%%%%%%%%%%%%%%%%%%%%%%%%%%%%%%%%%%%%%%%
\section{Introduction and statement of the results}

A few years ago Palis conjectured that any dy\-na\-mi\-cal system can be approximated in a certain topology by a hyperbolic system without cycles, or by a system exhibiting either a homoclinic tangency or a heterodimensional cycle (cf.~\cite{Pa,P}).
Later, Pujals and Sambarino~\cite{PS} proved this conjecture for the $C^1$ to\-po\-lo\-gy in the context of diffeomorphisms on compact surfaces. 
Notice that there are no heterodimensional cycles for surface diffeomorphisms. 

A version for flows appeared in~\cite{AH} stating that on a 3-dimensional compact manifold, a vector field can be $C^1$-approximated by another satisfying only one of the following phenomena:
\begin{itemize}
 \item  uniform hyperbolicity with no cycles,
 \item  a homoclinic tangency,
 \item  a singular cycle.
\end{itemize}
It has been further conjectured~(\cite[Conjecture 4]{P}) that the last situation above can be replaced by a singular hyperbolic set (see ~\cite{MPP} for the definition). 

Related results can be obtained when restricting to conservative systems. In fact, any divergence-free vector field defined on a 3-dimensional closed manifold can be $C^1$-approximated in the same class by a vector field either Anosov or with a homoclinic tangency associated to a hyperbolic closed orbit~\cite{BR2}. This was recently generalized in~\cite{F} for a $d$-dimensional closed manifold, $d\geq 4$: any divergence-free vector field can be $C^1$-approximated by another one satisfying either one of the properties of the 3-dimensional case, or with a heterodimensional cycle.
In this note we address the problem of obtaining a version of~\cite{BR2} in the Hamiltonian context.

Let $(M,\omega)$ be a compact symplectic $C^\infty$ $2d$-manifold, $d\geq2$, with a smooth boundary $\partial M$. Let $C^s(M)$, $2\leq s\leq\infty$, stand for the set of $C^s$ real-valued functions on $M$ constant on each connected component of $\partial M$, which we call $C^s$-Hamiltonians. 
We endow $C^s(M)$ with the $C^r$-Whitney topology.
For each $H\in C^s(M)$ one has the Hamiltonian vector field $X_H$ and the Hamiltonian flow $\varphi_H^t$. 
Consider an \textit{energy} $e\in H(M)\subset \Rr$ and the associated $\varphi_H^t$-invariant \textit{energy level set} $H^{-1}(e)$. 
An \emph{energy surface} is a connected component of $H^{-1}(e)$.
We say that it is regular if it does not contain critical points.

A regular energy surface is \emph{Anosov} if it is uniformly hyperbolic (cf. \cite{BFR}). 
It is {\it far from Anosov} if it is not in the closure of Anosov regular energy surfaces.
Moreover, Anosov regular energy surfaces do not contain singularities or elliptic closed orbits.

Let us state the main result in this note.

\begin{maintheorem}\label{New}
Let $d=2$, $H\in C^2(M)$ and $p\in M$.
There exists a Hamiltonian $C^2$-close to $H$ whose regular energy surface through $p$ is either Anosov or else it contains a homoclinic tangency associated to some hyperbolic closed orbit. 
\end{maintheorem}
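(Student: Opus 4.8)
The strategy is to push the problem down to a Poincar\'e section inside the energy surface through $p$, where it becomes an already known statement about area-preserving surface diffeomorphisms, and then to \emph{suspend} the resulting perturbation back to a Hamiltonian on $M$. The hypothesis $d=2$ and the $C^2$-topology are used only in the suspension.

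\textbf{Step 1 (set-up).} After an arbitrarily $C^2$-small perturbation we may assume $p$ is a regular point, so the connected component $\EE$ of $H^{-1}(H(p))$ through $p$ is a compact $3$-manifold on which $\varphi_H^t$ is nonsingular. Since $\varphi_H^t$ preserves $\omega^2$ and $H$, the restricted flow preserves the volume form $\mu_\EE$ determined near $\EE$ by $\omega^2=dH\wedge\mu_\EE$; hence $X_H|_\EE$ is a nonsingular divergence-free vector field on $\EE$. If $\EE$ is Anosov we are done with $\tilde H=H$. If $\EE$ lies in the closure of Anosov regular energy surfaces we are also done: a $C^2$-close Hamiltonian with an Anosov energy surface near $\EE$ still has one through $p$, since the Anosov property is $C^1$-open for flows (hence $C^2$-open in $H$) and energy surfaces vary continuously with $H$. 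So henceforth $\EE$ is far from Anosov.

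\textbf{Step 2 (down to a section).} A $2$-disk transverse to $X_H|_\EE$ has a first-return map which is a symplectomorphism of a surface, and for such maps the analogous dichotomy is already known: each is either Anosov or $C^1$-approximable by one with a homoclinic tangency at a hyperbolic periodic orbit (the surface counterpart of the three-dimensional divergence-free result \cite{BR2}). Concretely, after a preliminary $C^2$-small perturbation of $H$, if needed, to create a suitable closed orbit of $\varphi_H^t$ in $\EE$ --- elliptic, or hyperbolic carrying a transverse homoclinic point --- fix a small disk $\Sigma\subset\EE$ transverse to $X_H$, with first-return map $P\colon\Sigma\to\Sigma$ area-preserving for the area form induced by $\mu_\EE$. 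The known dichotomy yields a perturbation $\tilde P$ of $P$, $C^1$-close and supported in a sub-disk, that is either Anosov or has a homoclinic tangency at a hyperbolic periodic orbit. Replacing $P$ by $\tilde P$ modifies $X_H|_\EE$, inside a small flowbox $U$ about an orbit arc meeting $\Sigma$, into a nonsingular divergence-free vector field $\tilde X$ on $\EE$ which is $C^1$-close to $X_H|_\EE$, equal to it off $U$, and either Anosov or with a homoclinic tangency at a hyperbolic closed orbit.

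\textbf{Step 3 (suspension) and the main difficulty.} It remains to realize $\tilde X$ inside an energy surface of a Hamiltonian $C^2$-close to $H$. Choosing $U$ away from $p$, fix on a neighborhood $V\subset M$ of $\overline U$ symplectic ``flowbox'' coordinates adapted to $(\omega,H)$: $\omega$ in Darboux form, $H$ one coordinate (up to an affine normalization), $X_H$ the corresponding coordinate flow, $\Sigma$ a coordinate slice, $P$ its transition map. Writing $\tilde P=\psi\circ P$ with $\psi$ a $C^1$-small area-preserving map supported in a disk, realize $\psi$ as the time-one map of a time-dependent Hamiltonian isotopy, spread it along the flow direction across $V$, cut it off to the identity near $\partial V$, and convert its time-dependence into an autonomous dependence on the remaining coordinates --- here the hypothesis $d=2$ enters, the section being a surface --- to obtain a $C^2$-small $h$ supported in $V$. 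Then $\tilde H:=H+h$ is constant on $\partial M$, its energy surface $\tilde\EE$ through $p$ is regular, and the first-return map of $\tilde\EE$ on the corresponding section is $\tilde P$; hence $X_{\tilde H}|_{\tilde\EE}$ agrees with $\tilde X$, up to a smooth positive reparametrization and a diffeomorphism, near the relevant invariant set. Reparametrization and conjugacy preserve uniform hyperbolicity, as well as hyperbolic closed orbits with their stable and unstable manifolds and any tangency between them; so $\tilde\EE$ is Anosov in the first case and contains a homoclinic tangency associated with a hyperbolic closed orbit in the second, which proves the theorem. The substantive work lies entirely in this step: producing the symplectic flowbox normal form adapted to the level sets of $H$; controlling that the energy surface itself \emph{moves} with the perturbation, so $\tilde\EE$ must still be identified through $p$ with the $3$-manifold carrying $\tilde X$; converting a perturbation of a return map into an \emph{autonomous} Hamiltonian perturbation by suspension; and maintaining quantitative $C^2$-control through the cut-offs without creating new critical points or altering the topology of the energy surface.
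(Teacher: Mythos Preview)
Your overall architecture --- reduce to the far-from-Anosov case, drop to a Poincar\'e section, perturb the return map to create a tangency, then suspend --- is exactly the paper's. But Step~2 as written has two genuine gaps.

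First, the ``known dichotomy'' you invoke is a statement about area-preserving diffeomorphisms of \emph{compact surfaces}; it does not apply to a first-return map defined only on a small transversal disk, where the Anosov alternative is meaningless. You try to patch this by first making a ``preliminary $C^2$-small perturbation of $H$, if needed, to create a suitable closed orbit --- elliptic, or hyperbolic carrying a transverse homoclinic point'', but you do not say how. That step is the entire content of Theorem~\ref{BD} (the Hamiltonian Newhouse dichotomy): far from Anosov implies a $C^2$-close Hamiltonian with an elliptic closed orbit. The paper invokes Theorem~\ref{BD} on $H$ itself to obtain an elliptic orbit, and then applies Gelfreich--Turaev (Theorem~\ref{GT}) to the Poincar\'e map of that orbit to manufacture the tangency; your Step~2 buries both of these nontrivial ingredients inside vague phrases. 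Once you are at an elliptic fixed point of the section map the Anosov branch cannot occur, so carrying it through Step~3 (``$\tilde\EE$ is Anosov in the first case'') is incoherent with your own reduction in Step~1.

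Second, the regularity bookkeeping in your suspension is off. You take $\tilde P$ only $C^1$-close to $P$ and claim a $C^2$-small Hamiltonian perturbation. The paper's suspension (Theorem~\ref{thm suspension} and Proposition~\ref{thm pert}) shows that controlling $\|\widetilde H-H\|_{C^2}$ requires bounding third derivatives of the map perturbation (the estimate \eqref{bound on Ham0} involves $\|g-\id\|_{C^3}$); this is why the paper first passes to a $C^\infty$ Hamiltonian and then uses that Gelfreich--Turaev gives $C^r$-closeness for every $r$. Your Step~3 sketch of ``spread the isotopy along the flow direction and cut off'' is the right idea, but without the $C^3$ control on $\tilde P$ the conclusion does not follow.
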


Recall that the existence of homoclinic tangencies is a sufficient condition to have elliptic points (see~\cite{N,Du}). 
We see that it is also a necessary condition for, at least, a sufficient $C^1$-close vector field.

\begin{maintheorem}\label{New2}
Let $d=2$, $H\in C^2(M)$ and $p\in M$ lies in an elliptic closed orbit of $H$. Then, there exists a Hamiltonian $C^2$-close to $H$ whose regular energy surface through $p$ has a homoclinic tangency associated to some hyperbolic closed orbit.
\end{maintheorem}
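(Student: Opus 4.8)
\textbf{Proof strategy for Theorem \ref{New2}.}
The plan is to reduce the problem to a statement about area-preserving surface maps via Poincar\'e sections, apply a known perturbation result in that setting, and then lift the perturbation back to the Hamiltonian by the suspension construction announced in the abstract. Since $p$ lies on an elliptic closed orbit $\gamma$ of $X_H$ inside the regular energy surface $\mathcal{E}=\mathcal{E}_{H,p}$, I would first take a transverse $2$-dimensional local section $\Sigma\subset\mathcal{E}$ at $p$ and let $f\colon U\subset\Sigma\to\Sigma$ be the associated Poincar\'e return map, which is a $C^2$ symplectomorphism (with respect to the restriction of $\omega$) having $p$ as an elliptic fixed point. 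The key classical input is that near an elliptic fixed point of an area-preserving surface diffeomorphism one can $C^\infty$-perturb (indeed $C^2$-perturb) the map so that the resulting map has a hyperbolic periodic point with a homoclinic tangency: this is essentially the Newhouse phenomenon together with the standard fact that elliptic points are not robustly elliptic in the $C^r$ topology for surface area-preserving maps — one first destroys the KAM picture to create a hyperbolic periodic orbit with a quadratic homoclinic tangency (cf.~\cite{N,Du}). Call the perturbed map $\tilde f$, which is $C^2$-close to $f$, still area-preserving, and supported in a small neighbourhood of $p$ in $\Sigma$.

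The heart of the argument is then the suspension step: I would invoke the general Hamiltonian suspension construction (the one referenced in the abstract, to be proved in the earlier sections of the paper) to produce a Hamiltonian $\tilde H\in C^2(M)$, $C^2$-close to $H$, whose flow realizes $\tilde f$ as the Poincar\'e return map on $\Sigma$ for the energy surface through $p$. Concretely: in a tubular neighbourhood of $\gamma$ the flow $\varphi_H^t$ on $\mathcal{E}$ is conjugate to a suspension flow over $f$ with some roof function; replacing $f$ by $\tilde f$ in a flow box (and interpolating the roof function and the symplectic data so as to glue smoothly with $H$ outside the flow box) yields a new Hamiltonian, and one checks that the required closeness in the $C^2$-Whitney topology is inherited from the closeness of $\tilde f$ to $f$ because the perturbation is localized and the suspension construction is continuous in the relevant topologies. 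Because the return map $\tilde f$ has a hyperbolic periodic point with a homoclinic tangency, the corresponding periodic orbit of $X_{\tilde H}$ inside the energy surface through $p$ is hyperbolic (as a closed orbit, i.e.\ the Poincar\'e map is hyperbolic) and the transverse intersection structure of its invariant manifolds inside the $3$-dimensional energy surface exhibits the same quadratic tangency. Since the invariant manifolds of the periodic orbit coincide with the suspensions of the invariant manifolds of the fixed point of $\tilde f$, a tangency for $\tilde f$ suspends to a homoclinic tangency for the flow; this is where one must be a little careful, because the energy surface is $3$-dimensional and one must verify that no extra transversality is gained from the flow direction, but the flow direction is common to both invariant manifolds along the orbit, so the tangency in $\Sigma$ persists as a tangency in $\mathcal{E}$.

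The main obstacle I anticipate is the gluing/interpolation in the suspension step: one must simultaneously control the roof (return-time) function, the symplectic form on the section, and the Hamiltonian function so that the modified object is genuinely of the form $X_{\tilde H}$ for a globally defined $C^2$-Hamiltonian $\tilde H$ that is $C^2$-close to $H$, rather than merely a $C^2$-close vector field. This is precisely the content of the general suspension result cited from the earlier sections, so in the proof I would quote it as a black box and concentrate on (i) producing the local area-preserving perturbation $\tilde f$ with the desired hyperbolic-orbit-and-tangency, and (ii) checking that the homoclinic tangency of $\tilde f$ suspends to a homoclinic tangency of the energy surface through $p$. A secondary point requiring attention is that the energy surface through $p$ for $\tilde H$ should still be regular near $p$ (so that ``the regular energy surface through $p$'' makes sense): since $p$ was on a regular periodic orbit for $H$ and the perturbation is $C^2$-small, $p$ remains non-critical for $\tilde H$, and the local energy surface structure is preserved.
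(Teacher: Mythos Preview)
Your overall architecture matches the paper's exactly: reduce to the Poincar\'e map, perturb the area-preserving map to create a homoclinic tangency, then suspend back via Theorem~\ref{thm suspension}. Two points need correction.

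First, there is a regularity gap. For $H\in C^2(M)$ the Hamiltonian vector field is only $C^1$, so the Poincar\'e map $f$ is $C^1$, not $C^2$ as you assert. More seriously, the suspension result (Theorem~\ref{thm suspension}) is stated for $H\in C^\infty(M)$ and requires the perturbed map $\widetilde f$ to be $C^3$-close to $f$ in order to produce a $C^2$-close Hamiltonian (see the estimate~\eqref{bound on Ham0}, which involves $\|g-\id\|_{C^3}$). Your outline, which only controls $\widetilde f$ in $C^2$ and leaves $H$ in $C^2$, does not meet these hypotheses. The paper fixes this by inserting a preliminary step: approximate $H$ in $C^2$ by a $C^\infty$ Hamiltonian $\widetilde H$ that still has the (continued) elliptic closed orbit, take its $C^\infty$ Poincar\'e map, and only then perturb. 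The perturbation of the map is taken $C^\infty$-close (hence $C^3$-close), so the suspension theorem applies.

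Second, the reference for the area-preserving perturbation step is Gelfreich--Turaev~\cite{GT} (Theorem~\ref{GT}): any $C^r$ area-preserving map with an elliptic point is $C^r$-approximated by one with a homoclinic tangency. The references~\cite{N,Du} you cite are used in the paper for the \emph{converse} implication (homoclinic tangencies give rise to elliptic points), so they do not supply what you need here. Your remarks about suspending the tangency to the $3$-dimensional energy surface and about regularity of the energy surface through $p$ are fine and are implicit in the paper's argument.
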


In the proof of Theorem~\ref{New2} (section~\ref{end}) we apply a mechanism introduced in~\cite{GT} to create homoclinic intersections by perturbations of area-preserving maps with elliptic points (see section~\ref{sec GT}). 
We use that in our context by finding a Hamiltonian flow (through Theorem~\ref{thm suspension} below) that yields a Poincar\'e map with the same properties -- see section~\ref{sec Ham realization}. 
Theorem \ref{New} is then a direct consequence of Theorem~\ref{New2} and of the Newhouse dichotomy (Theorem \ref{BD}).

The last result in this note is a Hamiltonian suspension theorem, especially useful for the conversion of perturbative results between symplectomorphisms and Hamiltonian flows in any dimension $2d$. 
Indeed, if we perturb the Poincar\'e map of a periodic orbit (cf. section~\ref{sec Poincare maps}), there is a nearby Hamiltonian realizing the new map.

\begin{maintheorem}[Hamiltonian suspension]\label{thm suspension}
Let $d\geq 2$ and $H\in C^\infty(M)$ with Poincar\'e map $f$ at a periodic point $p$.
Then, for any $\epsilon>0$ there is $\delta>0$ such that for any symplectomorphism $\widetilde f$ being $\delta$-$C^3$-close to $f$, there is a Hamiltonian $\widetilde H$ $\epsilon$-$C^2$-close with Poincar\'e map $\widetilde f$.
\end{maintheorem}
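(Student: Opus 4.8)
The plan is to work in a tubular neighborhood of the periodic orbit $\gamma$ through $p$ and build the new Hamiltonian by "editing" the flow inside a flow-box, leaving it untouched outside a slightly larger box. First I would set up normalized coordinates: by a symplectic version of the flow-box theorem near a regular periodic orbit, there is a canonical neighborhood of $\gamma$ symplectomorphic to $\mathbb{S}^1\times(-a,a)\times V$, where $V\subset\Rr^{2d-2}$ is a ball transverse to $X_H$, with coordinates $(\theta,s,z)$ in which $X_H=\partial_\theta$, the Hamiltonian reads $H=s$ (up to the additive constant $e$), the slices $\{\theta=\mathrm{const}\}$ are local Poincaré sections, the section $\Sigma=\{\theta=0\}$ carries the symplectic form $\omega|_\Sigma$, and the return map to $\Sigma$ is exactly $f$. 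The period of $\gamma$ is normalized to $1$. The key point of this normal form is that the return map is realized by the $\theta$-monodromy of the flow, so perturbing $f$ amounts to prescribing a different monodromy while keeping $\dot\theta>0$ and the level structure $H=s$.

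Next, given $\widetilde f$ that is $\delta$-$C^3$-close to $f$, consider the "correction" $g=\widetilde f\circ f^{-1}$, a symplectomorphism of $\Sigma$ that is $C^3$-$\mathcal O(\delta)$-close to the identity. Since $g$ is $C^3$-close to $\id$ and symplectic, I would isotope it to the identity inside the group of symplectomorphisms by a path $g_t$, $t\in[0,1]$, with $g_0=\id$, $g_1=g$, generated by a time-dependent Hamiltonian $K_t:\Sigma\to\Rr$ which is $C^2$-$\mathcal O(\delta)$-small with support in a fixed compact subset of $V$ (this uses that near the identity the exponential map for Hamiltonian isotopies is a local submersion; the smallness of $K$ follows from the smallness of $g$ by the implicit/inverse function theorem in the relevant Banach spaces, with the loss of one derivative accounting for the $C^3$ hypothesis producing a $C^2$ conclusion). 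Now reparametrize $t$ by a smooth cutoff $\chi:\mathbb{S}^1\to[0,1]$ that equals $0$ near $\theta=0$ and $1$ near $\theta=1/2$ (say), and define the perturbed Hamiltonian on the flow-box by $\widetilde H(\theta,s,z)=s+\chi(\theta)\,K_{?}$—more precisely, I would insert the Hamiltonian isotopy as an extra "shear" in the $z$-directions concentrated in the angular window where $\chi'\neq 0$, so that the time-$1$ map in $\theta$ is now $g\circ f=\widetilde f$ on $\Sigma$, while $\widetilde H\equiv H$ for $\theta$ in a neighborhood of $0$ and hence the construction glues smoothly to $H$ outside the box. Concretely one takes $\widetilde H = s$ outside the support of $\chi'$ and, on the support, chooses $\widetilde H$ so that its Hamiltonian flow advances $(\theta,s,z)\mapsto(\theta+\tau,s,\cdot)$ composed with the prescribed $z$-isotopy; the total $z$-monodromy over one period is then $g$.

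The main obstacle is the compatibility of this insertion with the symplectic form and with the "$H$ constant on energy levels" structure: one must check that the modified vector field is still globally Hamiltonian for a single smooth function $\widetilde H$ (not merely locally), that $\widetilde H$ still has $\gamma$ as a periodic orbit on the same energy level, and that the $C^2$-distance $\|\widetilde H-H\|_{C^2}$ is controlled by $\|K\|_{C^2}$ and hence by $\delta$, uniformly. I would handle this by carrying out the construction entirely within the Darboux chart where $\omega=d\theta\wedge ds+\omega_0$ with $\omega_0$ the standard form on $V$: there the perturbation is literally $\widetilde H(\theta,s,z)-s = \chi(\theta)\,G(z) + (\text{lower order})$ for a suitable generating function $G$ of the isotopy, which is visibly a smooth function, visibly $C^2$-small, visibly supported away from $\{\theta=0\}$; since outside the chart $\widetilde H=H$, smoothness and the energy-surface statement are automatic. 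The only genuinely delicate estimate is the derivative-loss bookkeeping in passing from $\widetilde f$ being $C^3$-close to $f$ to $\widetilde H$ being $C^2$-close to $H$, which is why the hypothesis asks for $C^3$-closeness on the map side; this is a standard but careful application of the tame/implicit function estimates for the correspondence between symplectomorphisms near $\id$ and their generating Hamiltonians.
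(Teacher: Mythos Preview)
Your overall strategy matches the paper's: straighten the flow, write the correction as a near-identity symplectomorphism $g$, isotope $g$ to $\id$ through Hamiltonian diffeomorphisms, and graft the generating Hamiltonian of that isotopy onto $H$. But the setup contains a genuine gap. The normal form you claim cannot exist: if a neighbourhood of the closed orbit were symplectomorphic to the \emph{trivial product} $\mathbb{S}^1\times(-a,a)\times V$ with $\omega=d\theta\wedge ds+\omega_0$ and $H=s$, then $X_H=\partial_\theta$ forces the first-return map to $\{\theta=0\}$ to be the identity, contradicting your own requirement that it equal $f$. The monodromy $f$ is precisely the obstruction to straightening across a full period. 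This is why the paper applies the flowtube chart (Theorem~\ref{flowtube}) only to a compact \emph{arc} $\Gamma=\{\varphi_H^t(p):T_0\le t\le T-T_1\}$ and then transports $f$ and $\widetilde f$ into that chart by the flow, reducing the problem to realising a near-identity map $g$ on $\Rr^{2d-2}$ as a time-one map (Proposition~\ref{thm pert}).

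Your formula for $\widetilde H$ is also not right as written. A perturbation of the form $\chi(\theta)\,G(z)$ with $G$ independent of $\theta$ produces, as $z$-monodromy, only a reparametrised flow of the \emph{autonomous} Hamiltonian $G$; what is actually needed is the non-autonomous generator $K_\alpha$ of the isotopy, with $\alpha$ identified with the flow coordinate. The paper makes this explicit via generating functions $W_\alpha=\langle x',y\rangle+\ell(\alpha)V(x',y)$ and sets $\widetilde H_0=y_d+K_{x_d}(x,y)\,\widetilde\ell(y_d)$, including an additional bump $\widetilde\ell$ in the energy coordinate $y_d$---a direction you do not localise in at all---so that the perturbation has compact support and glues smoothly to $H$. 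Finally, the $C^3\to C^2$ loss you call ``standard'' is where the real work lies: Lemmas~\ref{lemma bdd on nabla V}--\ref{lemma est vf} track, via the Fa\`a di Bruno formula, exactly how $\|g-\id\|_{C^3}$ enters the bound for $\|\widetilde H_0-H_0\|_{C^2}$ (through $\|\ddot X_K\|_{C^0}$), and this is not obtainable by a soft implicit-function argument.
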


The proof of the above theorem is contained in section~\ref{sec Ham realization}.
It is based on the construction using generating functions of an isotopy between $f$ and $\widetilde f$, that extends to a Hamiltonian flow with the required properties.
This type of suspension of Poincar\'e maps is already mentioned in~\cite{Douady} when the manifold is the annulus (see also~\cite{Channell}), but without an explicit construction.

We remark that the result in ~\cite{GT} holds also for real-analytic Hamiltonians. 
However, the problem of suspending a real-analytic Poincar\'e map into a Hamiltonian flow is of a very different sort because of the lack of real-analytic bump functions, and remains an open problem.
So, in this case, it is required to find versions of the pertubation results directly for flows.

%%%%%%%%%%%%%%%%%%%%%%%%%%%%%%%%%%%%%%%%%%%%%%%
\section{Preliminaries}

In this section we assume $(M,\omega)$ to be a symplectic $2d$-manifold, with $d\geq2$. 

%%%%%%%%%%%%%%%%%
\subsection{Poincar\'e maps}\label{sec Poincare maps}

Consider $H\in C^2(M)$ and a closed orbit $\mathcal{O}$ with least period $T>0$ for $\varphi_H^t$.
At a point $p\in\mathcal{O}$ consider a transversal $\Sigma\subset M$ to the flow, i.e. a local $(2d-1)$-submanifold for which $X_H$ is nowhere tangencial.
By choosing $e=H(p)$, define the dimension $2d-2$ symplectic submanifold
$$
\Sigma_e=\Sigma\cap H^{-1}(\{e\}).
$$
Thus, for any $x\in\Sigma_e$,
$$
T_xH^{-1}(\{e\})=T_x\Sigma_e\oplus\Rr X_H(x),
$$
where $\Rr X_H(x)$ stands for the flow direction.

Let $U\subset M$ be some open neighbourhood of $p$ and $V=U\cap\Sigma_e$.
The {\it Poincar\'e (section) map} $f\colon V\to\Sigma_e$ is the return map of $\varphi_H^t$ to $\Sigma_e$. It is given by
$$
f(x)=\varphi_H^{\tau(x)}(x),
\qquad
x\in V,
$$
where $\tau$ is the return time to $\Sigma_e$ defined implicitely by the relation $\varphi_H^{\tau(x)}(x)\in\Sigma_e$ and satisfying $\tau(p)=T$.
In addition, $p$ is a fixed point of $f$. 
Notice that one needs to assume that $U$ is a small enough neighbourhood of $p$. Thus, $f$ is a $C^{1}$-symplectomorphism between $V$ and its image.
Moreover, any two Poincar\'e section maps of the same closed orbit are conjugate by a symplectomorphism.

%%%%%%%%%%%%%%%%%%%%%%%%%%%%%%%%%%%
\subsection{Homoclinic tangencies}

Take $H\in C^2(M)$, a non-constant hyperbolic closed orbit $\mathcal{O}$ and a transversal section at a point $p\in\mathcal{O}$.
Let $W^s_p$ be the stable manifold at $p$ of the Poincar\'e map, and $W^u_p$ the unstable manifold. 
We say that $\mathcal{O}$ has a \emph{homoclinic tangency} at $q\not=p$ if the invariant manifolds $W^s_p$ and $W^u_p$ have a non transversal intersection, i.e.:
\begin{itemize}
 \item $T_q W^s_p\cap T_q W^u_p$ contains a nonzero vector,
 \item $T_q W^s_p\oplus T_q W^u_p\oplus \Rr X(q)\not= T_q H^{-1}(p)$.
\end{itemize}

%%%%%%%%%%%%%%%%%%%%%%%%%%%%%%%%%%%
\subsection{Hamiltonian flowtube coordinates}\label{LFC}

Denote the coordinates in $\Rr^{2d}$ as $(x_1,\dots,x_d,y_1,\dots,y_d)$.
The canonical symplectic form is given by
\begin{equation*}\label{canonical symplectic form}
\omega_0=\sum_{i=1}^d dx_i\land dy_{i}.
\end{equation*}
The Hamiltonian vector field of any smooth Hamiltonian $H$ on $(\Rr^{2d},\omega_0)$ is then 
$$
X_H=\Jj\nabla H,
$$
where 
$\Jj=\left(\begin{smallmatrix}0& I \\
-I&0\end{smallmatrix}\right)$ 
and $I$ is the $d\times d$ identity matrix.

Consider $H_0\colon\Rr^{2d}\to\Rr$ given by $H_0= y_{d}$, so that 
$$
X_{H_0}=\frac{\partial}{\partial x_d}.
$$
Hence, the flow is $\varphi_{H_0}^t=\id+(0,\dots,t,0,\dots,0)$.

The following results provide us with the above coordinates, useful to perform local perturbations of a Hamiltonian defined on any symplectic manifold $(M,\omega)$.

\begin{theorem}[Hamiltonian flowbox, cf.~e.g.~\cite{BD2}]\label{robinson}
Let $H\in C^s(M)$, $s\geq2$ or $s=\infty$, and $p\in M$. If $dH(p)\not=0$, there exists a neighborhood $U\subset M$ of $p$ and a local $C^{s-1}$-symplectomorphism $g\colon (U,\omega)\to(\Rr^{2d},\omega_0)$ such that $H=H_0\circ g$ on $U$.
\end{theorem}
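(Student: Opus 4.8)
The plan is to read this as a Hamiltonian refinement of Darboux's theorem and to build $g$ in two stages.

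\emph{Stage 1: reduction to the flat model.} Since $\omega$ is a smooth symplectic form, the classical Darboux theorem provides a smooth symplectomorphism $g_0\colon (U_0,\omega)\to(\Rr^{2d},\omega_0)$ from a neighbourhood of $p$. Replacing $H$ by $H\circ g_0^{-1}$ and $p$ by $g_0(p)$, we may assume $(M,\omega)=(\Rr^{2d},\omega_0)$ near $p$, with $H\in C^s$ and $dH(p)\neq0$; after a linear symplectic change of coordinates we may further normalize $dH(p)=dy_d$, so that $X_H(p)=\Jj\nabla H(p)$ is a nonzero multiple of $\partial/\partial x_d$. It then suffices to produce a local $C^{s-1}$-symplectomorphism $\Phi$ of $(\Rr^{2d},\omega_0)$ with $H\circ\Phi=y_d$ (an additive constant is harmless, being absorbed into the symplectic translation $y_d\mapsto y_d-c$), and to set $g=\Phi^{-1}\circ g_0$.

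\emph{Stage 2: straightening $H$ symplectically.} Here $X_H$ is of class $C^{s-1}$, does not vanish near $p$, and $H$ is a first integral of $\varphi_H^t$. Fix a hypersurface transverse to $X_H$ through $p$ and use the flow to define a $C^{s-1}$ function $x_d$ with $x_d(p)=0$ and $X_H(x_d)\equiv1$; then the Poisson bracket $\{x_d,H\}$ is constant (equal to $\pm1$), so the $C^{s-1}$ functions $x_d$ and $y_d:=H$ form a canonically conjugate pair with independent differentials at $p$. One now completes $(x_d,y_d)$ to a full canonical system $(x_1,\dots,x_d,y_1,\dots,y_d)$ near $p$ by the inductive step of Darboux's theorem: the submanifold $N=\{x_d=0,\ H=H(p)\}$ is symplectic, since $T_pN$ is the $\omega_0$-orthogonal complement of $\mathrm{span}(X_{x_d},X_H)$, on which $\omega_0$ is nondegenerate because $\omega_0(X_{x_d},X_H)=\pm\{x_d,H\}\neq0$; one chooses Darboux coordinates on $N$ by induction on the dimension and extends them to a neighbourhood of $p$ in $\Rr^{2d}$ as invariants of the commuting flows of $X_{x_d}$ and $X_H$ (they commute because $[X_{x_d},X_H]=X_{\{x_d,H\}}=0$). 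In the resulting Darboux chart one has $\omega_0=\sum_i dx_i\wedge dy_i$ and the coordinate function $y_d$ equals $H$, which is exactly the desired $\Phi^{-1}$. Equivalently, $\Phi$ may be extracted from the solution $S(x,Y)$ of the Hamilton--Jacobi equation $H(x,\partial S/\partial x)=Y_d$, read as a type-2 generating function of the sought canonical transformation.

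\emph{Main difficulty.} The linear algebra and the inductive Darboux bookkeeping are routine; the delicate point is regularity. The only non-smooth ingredient is $X_H$ — hence its flow, and the function $x_d$ manufactured from it — which is of class $C^{s-1}$, and one has to verify that neither the completion to canonical coordinates nor the passage to the flat model costs an additional derivative, so that $g$ is exactly $C^{s-1}$. This is where the closedness of $\omega$ and the explicit homotopy formulas in the Poincar\'e lemma / Moser deformation arguments are essential, and it is the technical core of the proof in~\cite{BD2}.
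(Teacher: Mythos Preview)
The paper does not supply a proof of this theorem; it is quoted as a known result with a reference to~\cite{BD2}. So there is no ``paper's own proof'' to compare against, and your outline has to be judged on its own merits.

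Your two-stage strategy (Darboux reduction, then symplectic straightening with $y_d=H$ and $x_d$ the flow-time) is the standard one and is essentially correct. The one place that deserves more care is precisely the point you flag at the end. In Stage~2 you propose to extend Darboux coordinates from $N=\{x_d=0,\,H=H(p)\}$ as joint invariants of the flows of $X_H$ \emph{and} $X_{x_d}$. But $x_d$ is only $C^{s-1}$ (it is manufactured from the $C^{s-1}$ flow $\varphi_H^t$), so $X_{x_d}$ is merely $C^{s-2}$, and flowing along it would drop the regularity of $g$ to $C^{s-2}$ rather than $C^{s-1}$. Deferring this to~\cite{BD2} is fine as a citation, but as written your construction does lose a derivative.

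A cleaner route that avoids this uses only the flow of $X_H$. After the Darboux/linear reduction, take the smooth transversal $\Sigma=\{x_d=0\}$ and set $\Psi(q,t)=\varphi_H^t(q)$, a $C^{s-1}$ local diffeomorphism. Since $\Psi_*\partial_t=X_H$ and $L_{X_H}\omega_0=0$, one computes
\[
\Psi^*\omega_0=\sum_{i<d}dx_i\wedge dy_i+dt\wedge d(H|_\Sigma),
\]
the first summand being $\omega_0|_\Sigma$ because $\Psi|_{t=0}=\id_\Sigma$. Now replace the coordinate $y_d$ on $\Sigma$ by $h:=H|_\Sigma$ (a $C^s$ change, invertible since $\partial H/\partial y_d(p)\neq0$) and rename $t$ as $x_d$; in the new coordinates $\Psi^*\omega_0=\omega_0$ and $H\circ\Psi=y_d$. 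The composite is the required $C^{s-1}$ symplectomorphism, with no second flow and hence no extra loss of regularity.
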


By considering neighbourhoods as above taken along a piece of a trajectory, we can find a small tubular neighborhood where the flow is again straightened. This is the content of the next result.

\begin{theorem}[Hamiltonian flowtube]\label{flowtube}
Let $H\in C^s(M)$, $s\geq2$ or $s=\infty$, and a non-closed compact self-avoiding arc of trajectory $\Gamma\subset M$. 
There exists a neighborhood $W\subset M$ of $\Gamma$ and a local $C^{s-1}$-symplectomorphism $\phi\colon (W,\omega)\to(\Rr^{2d},\omega_0)$ such that $H=H_0\circ \phi$ on $W$.
\end{theorem}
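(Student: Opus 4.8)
The plan is to deduce the Hamiltonian flowtube theorem (Theorem~\ref{flowtube}) from the Hamiltonian flowbox theorem (Theorem~\ref{robinson}) by covering the arc $\Gamma$ with finitely many flowbox charts and carefully gluing them along the flow direction. First I would note that $\Gamma$ being an arc of trajectory means $dH\neq 0$ along $\Gamma$ (the orbit is non-constant), so Theorem~\ref{robinson} applies at every point of $\Gamma$. Parametrize $\Gamma$ by the flow as $\{\varphi_H^t(p_0): t\in[0,L]\}$ for some $p_0$ and $L>0$; since the arc is compact and self-avoiding, it is an embedded segment. By compactness, choose $0=t_0<t_1<\dots<t_N=L$ and flowbox neighborhoods $U_j\ni \varphi_H^{t_j}(p_0)$ with symplectomorphisms $g_j\colon U_j\to(\Rr^{2d},\omega_0)$, $H=H_0\circ g_j$, such that consecutive $U_j$ overlap in a neighborhood of the subarc between $\varphi_H^{t_j}$ and $\varphi_H^{t_{j+1}}$, and the spacing is fine enough that in each chart the relevant piece of $\Gamma$ maps to a straight segment $\{(s,0,\dots,0,0): s\in[a_j,b_j]\}$ of the $x_d$-axis (recall $X_{H_0}=\partial/\partial x_d$).

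The core of the argument is to patch the charts $g_j$ into a single chart $\phi$ on a thin tube $W\subset\bigcup_j U_j$ around $\Gamma$. On the overlap $U_j\cap U_{j+1}$, the composition $g_{j+1}\circ g_j^{-1}$ is a local symplectomorphism of $(\Rr^{2d},\omega_0)$ fixing $H_0=y_d$, hence it preserves each level $\{y_d=e\}$ and commutes with the flow $\varphi_{H_0}^t=\id+(t,0,\dots,0)$ up to a shift; in particular it has the form $(x,y)\mapsto (x_d+c_j + R(x',y), \, S(x',y))$ in suitable notation, i.e. it is, on each energy level, a symplectomorphism of the $(2d-2)$-dimensional transversal that is then translated in the $x_d$-direction. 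The plan is to modify $g_j$ successively: replace $g_{j}$ by $\Theta_j\circ g_j$ where $\Theta_j$ is built from the transition data so that after finitely many steps all the charts agree on their overlaps and glue to a globally defined $\phi$ on a tube $W$. Concretely, on each energy surface one is gluing local symplectomorphisms of a symplectic $(2d-2)$-manifold along a flow segment, which is standard once one arranges the transversal sections to be consistently identified; shrinking $W$ enough that the fibered structure is clean makes the gluing a partition-type argument along the single parameter $s$ (the $x_d$-coordinate), not over a higher-dimensional base.

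The main obstacle I expect is the gluing itself: ensuring the patched map $\phi$ is globally a \emph{symplectomorphism} and not merely a diffeomorphism, while keeping $H=H_0\circ\phi$ exactly. The subtlety is that symplectomorphisms do not form a sheaf that can be glued by arbitrary partitions of unity, so one cannot just average the $g_j$. The resolution is to exploit the flow direction: because $X_{H_0}=\partial/\partial x_d$ and the transition maps commute with the $x_d$-translation on each level, the problem genuinely reduces to one dimension along $\Gamma$ — one transports the transversal symplectic chart $g_0|_{\Sigma}$ from the starting section along the flow using $\varphi_H^t$ on the domain side and $\varphi_{H_0}^t$ on the target side, and defines $\phi(\varphi_H^t(z)) = \varphi_{H_0}^t(g_0(z))$ for $z$ in a small transversal disk. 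This map automatically intertwines the flows, hence sends $X_H$ to $X_{H_0}$ and satisfies $H=H_0\circ\phi$; that it is symplectic follows because the flows are symplectic and $g_0$ is symplectic on the transversal together with $dH$/the flow direction spanning the complement. Checking that this $\phi$ is well-defined (independence of the choice of $t$, using self-avoidance of $\Gamma$) and smooth of class $C^{s-1}$, and that its image contains a full neighborhood of $\Gamma$, completes the proof.
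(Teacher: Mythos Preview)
The paper does not actually prove Theorem~\ref{flowtube}; it is stated as a standard consequence of the flowbox theorem, with only the sentence preceding it as justification: ``By considering neighbourhoods as above taken along a piece of a trajectory, we can find a small tubular neighborhood where the flow is again straightened.'' Your proposal is a correct way to make that remark precise, and the flow-transport construction you arrive at in the last paragraph is exactly the standard argument.

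In fact, the detour through a finite cover by charts $g_j$ and their transition maps is unnecessary: the single definition $\phi(\varphi_H^t(z))=\varphi_{H_0}^t(g_0(z))$, with $g_0$ one flowbox chart near $p_0$ furnished by Theorem~\ref{robinson}, already does the whole job on a thin tube around $\Gamma$. Well-definedness is precisely the fact that $g_0$ conjugates $\varphi_H$ to $\varphi_{H_0}$ on its domain, so different admissible choices of $t$ give the same value. Symplecticity is more direct than you indicate: near any point of the tube, $\phi$ coincides with $\varphi_{H_0}^{t_0}\circ g_0\circ\varphi_H^{-t_0}$ for a \emph{fixed} $t_0$, a composition of three symplectomorphisms of full $2d$-dimensional open sets, so there is no need to split into a transversal part and a flow-direction part. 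The identity $H=H_0\circ\phi$ follows from energy conservation for both flows together with $H=H_0\circ g_0$ on $U_0$, and the $C^{s-1}$ regularity comes from that of $g_0$ and of the flows.
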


%%%%%%%%%%%%%%%%%%%%%%%%%%%%
\subsection{Density of elliptic closed orbits}

The next result is the Hamiltonian version of the Newhouse dichotomy~\cite{N} for 4-dimensional Hamiltonians. 
As previously mentioned, it will be used in the proof of Theorem~\ref{New} (see section~\ref{end}).

\begin{theorem}[\cite{BD}]\label{BD}
Let $d=2$.
Given an open set $U\subset M$ intersecting a far from Anosov regular energy surface of $H\in C^2(M)$, there is a $C^2$-nearby Hamiltonian having an elliptic closed orbit through $U$. Moreover, this implies that, for far from Anosov regular energy surfaces of a $ C^2$-generic Hamiltonian, the elliptic closed orbits are dense.
\end{theorem}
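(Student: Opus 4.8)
The plan is to run the Ma\~{n}\'{e}--Bochi--Newhouse dichotomy on the transversal linear Poincar\'e flow of the energy surface meeting $U$, exploiting that for a conservative flow on a $3$-manifold a dominated splitting of the transversal flow is automatically hyperbolic. Fix a regular energy surface $\EE\subset H^{-1}(e)$ through a point of $U$: it is a $3$-dimensional $\varphi_H^t$-invariant submanifold carrying the natural smooth $\varphi_H^t$-invariant volume on $H^{-1}(e)$, so by Poincar\'e recurrence every point of $\EE$ is non-wandering. A Hamiltonian closing lemma then lets me first $C^2$-perturb $H$ so that there is a closed orbit through $U\cap\EE$; if it is elliptic we are done, so assume all closed orbits meeting $U$ are hyperbolic. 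Along such an orbit the linearized Poincar\'e map acts on the $2$-dimensional symplectic space $T_p\Sigma_e$ and hence lies in $SL(2,\Rr)$, so we are dealing with an area-preserving linear cocycle over the closed orbits in $U$.

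Next I would invoke the basic alternative for such cocycles: \emph{either} the transversal linear Poincar\'e flow admits a dominated splitting, with uniform constants, over the closure of the union of closed orbits meeting $U$ (a property robust under small perturbations of $H$), \emph{or} there is a localized, symplectic, $C^2$-small perturbation of $H$ --- built inside the Hamiltonian flowtube coordinates of Theorem~\ref{flowtube} along one such orbit, so that it is supported in a thin tube and leaves $\EE$ unchanged outside it --- after which some closed orbit through $U$ acquires a pair of non-real eigenvalues of modulus one, i.e.\ becomes elliptic. This weakening step is the Bochi--Ma\~{n}\'{e} rotation argument adapted to $C^2$ Hamiltonians, and is precisely the content of \cite{BD}.

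It remains to exclude the first alternative under the hypothesis that $U$ meets a far-from-Anosov energy surface. Since $\EE$ is conservative its closed orbits are dense in it, so a uniformly dominated splitting over the closed orbits extends to a dominated splitting $E\oplus F$ of the transversal linear Poincar\'e flow over all of $\EE$; as the flow preserves a volume on the $3$-manifold $\EE$, the transversal determinant equals $1$, which forces $E$ to be uniformly contracted and $F$ uniformly expanded. Hence $\EE$ is Anosov, contradicting that it is far from Anosov; so the second alternative holds and we obtain a $C^2$-nearby Hamiltonian with an elliptic closed orbit through $U$. The ``moreover'' assertion follows by Baire category: elliptic closed orbits persist under $C^2$-perturbation, so for each $U_n$ in a countable basis of $M$ the set of $H$ possessing --- whenever $U_n$ meets a far-from-Anosov regular energy surface --- an elliptic closed orbit through $U_n$ is $C^2$-open and, by the first part, $C^2$-dense; intersecting over $n$ gives a residual set on which the elliptic closed orbits are dense in every far-from-Anosov regular energy surface.

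The main obstacle is the perturbation in the second alternative: the Bochi--Ma\~{n}\'{e} machinery is natively $C^1$, and carrying it out while remaining in the class of $C^2$ Hamiltonians, keeping the perturbation symplectic and supported near a single orbit, and controlling the perturbed energy surface, is the delicate part --- this is the technical work of \cite{BD}, which we quote here.
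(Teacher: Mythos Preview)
The paper does not prove this theorem at all: it is quoted verbatim from \cite{BD} and used as a black box in Section~\ref{end}. There is therefore no ``paper's own proof'' to compare your proposal against.

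As a sketch of the argument in \cite{BD} your outline is broadly on target (dichotomy for the transversal linear Poincar\'e flow, domination forces Anosov on a $3$-dimensional energy surface, otherwise perturb to create an elliptic orbit, then Baire for the generic statement), but as written it is circular: at the crucial step---producing a $C^2$-small \emph{Hamiltonian} perturbation that turns a hyperbolic closed orbit into an elliptic one---you explicitly invoke \cite{BD}, which is the very result you are trying to establish. The resolution of what you call the ``main obstacle'' is simpler than you suggest: the $C^2$ topology on $H$ corresponds to the $C^1$ topology on $X_H=\Jj\nabla H$, so the $C^1$ Pugh--Robinson closing lemma and the $C^1$ Bochi--Ma\~n\'e perturbation of the linear cocycle translate directly into $C^2$ statements about Hamiltonians, once one checks the perturbations can be made symplectic (this is done in \cite{BD} via flowtube coordinates). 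One further gap: you assert ``since $\EE$ is conservative its closed orbits are dense in it'', but this is false for a given $H$; density of closed orbits is a \emph{generic} consequence of the closing lemma, not an automatic one, so your extension of the dominated splitting from periodic orbits to all of $\EE$ needs an extra perturbation step or a different argument.
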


%%%%%%%%%%%%%%%%%%%%%%%%%%%%
\subsection{Creation of homoclinic tangencies}\label{sec GT}

The next result is central to the proof of Theorem~\ref{New}.
It deals with symplectomorphisms on a symplectic 2-manifold, i.e. area-preserving maps.

\begin{theorem}[Gelfreich and Turaev~\cite{GT}]\label{GT}
Let $r\in\Nn\cup\{\infty,\omega\}$.
Any $C^r$-area-preserving map with an elliptic point can be $C^r$-approximated by another area-preserving map with a homoclinic tangency.
\end{theorem}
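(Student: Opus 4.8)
The plan is to localize the whole construction to an arbitrarily small disk $D_0$ around the given elliptic point $p_0$ and to manufacture the tangency there in three stages: first make the elliptic point \emph{generic}, then use that genericity to produce nearby a hyperbolic periodic orbit whose invariant manifolds must intersect, and finally bend one branch of that unstable manifold until it becomes tangent to the stable one. For the first stage, the properties I want at $p_0$ --- that the rotation number $\alpha$ be Diophantine and that the first Birkhoff twist coefficient be nonzero --- depend only on a finite jet of the area-preserving map at $p_0$, so a $C^r$-small perturbation supported in a small disk around $p_0$ achieves them; afterwards KAM theory provides a nested family of invariant circles shrinking to $p_0$ together with a nondegenerate Birkhoff normal form, i.e.\ a genuine twist.

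For the second stage, fix a rational $k/q$ close to $\alpha$: the twist condition and the Poincar\'e--Birkhoff theorem yield a $q$-periodic orbit inside the invariant annular region trapped between two KAM circles close to $p_0$, and a further $C^r$-small local perturbation (preserving periodicity) makes one such orbit $\gamma$ hyperbolic while leaving nearby an elliptic periodic orbit $O$, both contained in a compact invariant region $A\subset D_0$ bounded by invariant circles. Each branch of $W^u_\gamma$ is then a forward-invariant curve trapped in the compact set $A$, so Poincar\'e recurrence together with a $\lambda$-lemma argument forces $W^u_\gamma$ to accumulate on $W^s_\gamma$ and hence to meet it off $\gamma$. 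If some such intersection is non-transverse we already have a homoclinic tangency and are done; otherwise all homoclinic intersections are transverse, $\gamma$ lies in a horseshoe $\Lambda\subset A$, and we pass to the last stage.

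The third stage, creating the tangency, is the heart of the matter. One checks that a branch of $W^u_\gamma$ enters an arbitrarily small neighbourhood $D$ of the elliptic orbit $O$, inside which the appropriate return map is $C^r$-close to a rotation composed with a twist; then a one-parameter family of $C^r$-small perturbations supported in $D$ that varies the local rotation angle rotates and folds the strand of $W^u_\gamma$ crossing $D$ relative to a strand of $W^s_\gamma$ also passing nearby, and by monitoring the signed distance and the relative slope between these strands along the family, an intermediate-value argument produces a parameter at which they are tangent --- a homoclinic tangency (a heteroclinic tangency inside $\Lambda$, if that is what appears first, is promoted to a homoclinic one using the $\lambda$-lemma and transitivity of $\Lambda$). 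The route actually taken by Gelfreich and Turaev is a renormalization argument of the same flavour: a suitably rescaled high-order return map near a resonance approximates in the $C^r$ topology an arbitrary area-preserving disk map, in particular one carrying a homoclinic tangency, which is therefore inherited after an arbitrarily small perturbation and a change of coordinates.

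I expect the third stage to be the genuine obstacle: one must control the very convoluted geometry of $W^s_\gamma$ and $W^u_\gamma$ well enough to be sure that the perturbation near $O$ actually drags a \emph{fold} of one manifold transversally across the other, rather than merely sliding the already-existing transverse intersection points around, and all of this must be done with perturbations of the prescribed regularity. In the analytic case $r=\omega$ there are no localized bump functions, so every perturbation above must instead be realized analytically --- for instance by composing with time-one maps of small analytic Hamiltonians, or by perturbing a generating function --- which is the one place where the analytic category costs extra care, though the conclusion still holds.
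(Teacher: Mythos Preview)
The paper does not prove this statement at all: Theorem~\ref{GT} is quoted verbatim from Gelfreich--Turaev~\cite{GT} and used as a black box in the proof of Theorem~\ref{New2} (step~(3) in section~\ref{end}). So there is no ``paper's own proof'' to compare against; your proposal is a sketch of the external result, not of anything the authors do.

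As a sketch of~\cite{GT} itself, your outline is in the right spirit but the emphasis is off. Your first two stages (make the elliptic point generic, trap hyperbolic periodic orbits between KAM curves and force homoclinic intersections) are standard preparation and essentially correct. The decisive mechanism in~\cite{GT}, however, is precisely the renormalization statement you relegate to a single sentence at the end: near a generic elliptic periodic point, suitably rescaled return maps approximate \emph{every} area-preserving map in the $C^r$ topology (``universal dynamics''), so in particular one exhibiting a homoclinic tangency, and this tangency pulls back after an arbitrarily small perturbation. Your hand-crafted third stage --- rotating a fold of $W^u_\gamma$ across $W^s_\gamma$ via a one-parameter family near the elliptic island and invoking the intermediate value theorem --- is plausible heuristics but not what Gelfreich and Turaev actually do, and making it rigorous (controlling that the perturbation produces a genuine quadratic fold rather than sliding transverse crossings) is exactly the difficulty the renormalization argument is designed to bypass. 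In the analytic case the universality statement is what carries the day, since it avoids any need for localized perturbations.
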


%%%%%%%%%%%%%%%%%%%%%%%%%%%%%%%%%%%%%%
\section{Hamiltonian realization of a perturbed Poincar\'e map}\label{sec Ham realization}

Consider a Hamiltonian flow with a closed orbit and an associated Poincar\'e section map in an energy surface.
Our goal in this section is to find a nearby Hamiltonian exhibiting a perturbed Poincar\'e map (Theorem~\ref{thm suspension}).
In order to prove Theorem~\ref{New2}, we will only make use of the case $d=2$.
Nevertheless, we study here the general situation for future use.

%%%%%%%%%
\subsection{Suspension of Poincar\'e maps}

Let $H\in C^\infty(M)$.
Consider a closed orbit $\OO$ with least period $T>0$, $p\in\OO$ and $e=H(p)$.
The Poincar\'e map is given by $f\colon V\to\Sigma_e$ as in section~\ref{sec Poincare maps}, having a fixed point at $p$.

The return time $\tau\colon V\to\Rr^+$ is close to $T$. 
So, choose $T_0,T_1>0$ such that $T_0+T_1\leq\frac12\min\{\tau(x)\colon x\in V\}$.
%In this way, we write
%$$
%\Sigma_{0}=\varphi_H^{T_0}(V)
%\quad\text{and}\quad
%\Sigma_{1}=\varphi_H^{-T_1}\circ f(V).
%$$
%These are transversals to the flow around $\OO \cap H^{-1}(e)$ at two distinct points.
%
Take the arc of trajectory
$$
\Gamma=\{\varphi_H^t(p)\colon T_0\leq t\leq T-T_1\}\subset\OO.
$$
By Theorem~\ref{flowtube}, in a tubular neighbourhood $W\subset M$ of $\Gamma$ we have $H=H_0\circ\phi$.
One can always compose $\phi$ with some symplectomorphism $\psi$ so that $S_0,S_1\subset \psi\circ\phi(W)$, where 
$$
S_0=\{(x_1,\dots,x_d,y_1,\dots,y_d)\in\Rr^{2d}\colon x_d=y_d=0\}
$$
and $S_1=\varphi_{H_0}^1(S_0)$.
We assume that $\phi$ is in fact $\psi\circ\phi$ in order to simplify notations.
Furthermore,
$$
\varphi_{H_0}^1|S_0=\phi\circ \varphi_H^{-T_1}\circ f\circ\varphi_H^{-T_0}\circ \phi^{-1},
$$
which is simply given by $\varphi_{H_0}^1(x,0,y,0)=(x,1,y,0)$ with 
$$
(x,y)=(x_1,\dots,x_{d-1},y_1,\dots,y_{d-1})\in\Rr^{2d-2}.
$$
This means that $\Pi\circ \varphi_{H_0}^1|S_0=\id$ by using the projection $\Pi\colon\Rr^{2d}\to\Rr^{2d-2}$, $(x,x_d,y,y_d)\mapsto(x,y)$.

Given a $C^\infty$-symplectomorphism $\widetilde f$ on $V$ that is $C^1$-close to $f$, we want to find a Hamiltonian $\widetilde H$ having $\widetilde f$ as Poincar\'e map.
The perturbation is constructed inside $W$, hence being enough to find $\widetilde H_0=\widetilde H\circ \phi^{-1}$ such that 
$$
\varphi_{\widetilde H_0}^1|S_0=\phi\circ \varphi_H^{-T_1}\circ \widetilde f\circ\varphi_H^{-T_0}\circ \phi^{-1}.
$$
Then, $g=\Pi\circ \varphi_{\widetilde H_0}^1|S_0$ is a $C^\infty$-symplectomorphism on $\Rr^{2d-2}$.
From the above considerations we know that for any $r\geq0$,
$$
\norm{g-\id}r\leq c\|\widetilde f-f\|_{C^r}
$$
for some $c_r>0$ depending on $H$.

Let $\rho>0$ and the euclidean open ball
$$
B_\rho=\{(x,y)\in\Rr^{2d-2}\colon \|(x,y)\|<\rho\}.
$$
The radius $\rho$ is chosen small enough so that $B_\rho\times\{0\leq x_d\leq1,|y_d|<\rho\}\subset \phi(W)$.

\begin{proposition}\label{thm pert}
There is $\delta,c>0$ such that for any $C^\infty$-symplectomorphism $g$ compactly supported in $B_\rho$, $\delta$-$C^{1}$-close to the identity, we can find $\widetilde H_0\in C^\infty(\Rr^{2d})$ compactly supported in $B_\rho$ verifying
$$
\Pi \circ \varphi_{\widetilde H_0}^1|{S_0}=g
$$
and
\begin{equation}\label{bound on Ham0}
\norm{\widetilde H_0-H_0}2 \leq c (1+\rho+\rho^{-1}+\rho \|g-\id\|_{C^3}^2) \,\norm{g-\id}1.
\end{equation}
Moreover, if $g$ fixes the origin, then $\varphi_{\widetilde H_0}^1(0)=(0,1,0,0)$.
\end{proposition}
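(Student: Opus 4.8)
The idea is to realize the given map $g$ as the time-$1$ map (after projection by $\Pi$) of a Hamiltonian obtained from a \emph{generating function} construction, interpolated from $H_0=y_d$ by a bump function in the $x_d$-variable. More precisely, write $g$ in terms of a generating function: since $g$ is $C^1$-close to the identity and compactly supported in $B_\rho$, standard symplectic geometry gives a $C^\infty$ function $\Theta\colon\Rr^{2d-2}\to\Rr$, compactly supported in $B_\rho$, such that the graph transform defined by $\Theta$ equals $g$; one has the estimate $\|\Theta\|_{C^{r+1}}\lesssim\|g-\id\|_{C^r}$ together with the quadratic correction terms (the $\rho\|g-\id\|_{C^3}^2$ factor in \eqref{bound on Ham0} comes from inverting the implicit relation that defines the generating function, where the second-order Taylor remainder enters). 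Then I would let $\beta\colon[0,1]\to[0,1]$ be a fixed $C^\infty$ bump with $\beta\equiv0$ near $0$ and $\beta\equiv1$ near $1$ and $\int_0^1\beta = $ something convenient, and set
$$
\widetilde H_0(x,x_d,y,y_d)=y_d+\chi(y_d)\,\beta'(x_d)\,\Theta(x,y),
$$
for a cutoff $\chi$ in $y_d$ supported in $(-\rho,\rho)$ with $\chi\equiv1$ near $0$. The point of using $\beta'(x_d)$ is that along the flow the $x_d$-coordinate increases from $0$ to $1$ (since $\partial\widetilde H_0/\partial y_d$ is $\approx1$), so $\beta'(x_d)\,dx_d$ integrates to $1$ over the passage; hence the net twist applied to the $(x,y)$-variables is exactly the time-$1$ Hamiltonian flow of $\Theta$ regarded as an autonomous Hamiltonian on $\Rr^{2d-2}$ — and that time-$1$ map is, up to the standard identification, the map generated by $\Theta$, namely $g$.

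The steps, in order: (1) produce $\Theta$ from $g$ with the stated norm bounds, invoking the implicit function theorem on $\Rr^{2d-2}$ and tracking the quadratic remainder; (2) define $\widetilde H_0$ as above and check it is $C^\infty$, compactly supported in $B_\rho$ (in the $(x,y)$ directions), and agrees with $H_0$ on $S_1$ and outside $\{0\le x_d\le1,\ |y_d|<\rho\}$; (3) solve the Hamiltonian ODE for $\widetilde H_0$: show $\dot x_d=\partial_{y_d}\widetilde H_0>0$ so trajectories starting on $S_0$ reach $S_1$, that $y_d$ is nearly constant so $\chi(y_d)\equiv1$ along them, reparametrize time by $x_d$, and identify the resulting $(x,y)$-flow with the flow of the autonomous Hamiltonian $\Theta$ run for total time $\int_0^1\beta'=1$; conclude $\Pi\circ\varphi_{\widetilde H_0}^1|_{S_0}=g$; (4) if $g$ fixes the origin then $\Theta$ has a critical point at $0$, the $(x,y)$-flow fixes $0$, and one reads off $\varphi_{\widetilde H_0}^1(0)=(0,1,0,0)$; (5) assemble the $C^2$ estimate \eqref{bound on Ham0}: $\|\widetilde H_0-H_0\|_{C^2}\lesssim\|\chi\|_{C^2}\|\beta'\|_{C^2}\|\Theta\|_{C^2}$, and $\|\Theta\|_{C^2}\lesssim\|g-\id\|_{C^1}$ up to the $\rho,\rho^{-1}$ factors from the ball rescaling and the $\rho\|g-\id\|_{C^3}^2$ quadratic term, and fix $\delta$ small enough that all the implicit-function-theorem inversions are valid.

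The main obstacle is step (3): carefully verifying that the time-$1$ map of $\widetilde H_0$, after the $\Pi$-projection, is \emph{exactly} $g$ rather than merely $C^1$-close to it. This requires the clean ``$\beta'(x_d)$ integrates to $1$'' mechanism to work without error, which in turn needs $y_d$ to be genuinely constant along the relevant orbits (so that $\chi(y_d)$ does not interfere) — but $y_d$ is not exactly conserved unless $\Theta$ is independent of $(x,y)$, so one must instead argue that the region $\{|y_d|<\rho\}$ is invariant for the duration, keeping $\chi(y_d)\equiv1$, and that within that region the reparametrized flow is autonomous in $(x,y)$. A secondary delicate point is the bookkeeping of the quadratic term $\rho\|g-\id\|_{C^3}^2$ in \eqref{bound on Ham0}, which must be traced through the generating-function inversion; everything else is routine cutoff-function calculus.
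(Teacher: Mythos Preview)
There is a genuine gap in step~(3). You assert that the time-$1$ map of the \emph{autonomous} Hamiltonian flow of $\Theta$ on $\Rr^{2d-2}$ coincides with the symplectomorphism $g$ whose generating function is $\Theta$. This is false: the two constructions agree only to first order in $\Theta$. Concretely, if $\Theta=V$ generates $g$ via $x=\partial_y W(x',y)$, $y'=\partial_{x'}W(x',y)$ with $W=\langle x',y\rangle+V$, then $g(x,y)=(x,y)-\Jj\nabla V(G(x,y))$ with $G=(\pi_1 g,\pi_2)$, whereas the time-$1$ flow of $V$ solves $\dot z=\Jj\nabla V(z)$; expanding both, they differ at order $\|V\|_{C^1}^2$. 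So your ``$\int_0^1\beta'=1$'' mechanism, even if it runs cleanly, yields the time-$1$ autonomous flow of $\Theta$, which is \emph{not} $g$. You correctly flag step~(3) as the obstacle, but the obstruction is structural, not a matter of checking that $y_d$ stays in the region where $\chi\equiv1$.

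The paper's proof uses the generating function only to build a smooth \emph{isotopy} $g_\alpha$ from $\id$ to $g$ (via $W_\alpha=\langle x',y\rangle+\ell(\alpha)V$), and then takes the genuinely non-autonomous vector field $X_\alpha=\dot g_\alpha\circ g_\alpha^{-1}$, whose time-$1$ flow is $g$ \emph{by construction}. Each $X_\alpha$ is Hamiltonian with a compactly supported Hamiltonian $K_\alpha$ (Poincar\'e lemma on $\Rr^{2d-2}$), and one sets $\widetilde H_0(x,x_d,y,y_d)=y_d+K_{x_d}(x,y)\,\widetilde\ell(y_d)$. The $\rho\|g-\id\|_{C^3}^2$ term in \eqref{bound on Ham0} then arises not from inverting a generating-function relation but from bounding $\|\ddot K\|_{C^0}$, which through $\ddot X_K$ involves $D^2\dot g$, $D\ddot g$, $\dddot g$ and hence $\|V\|_{C^3}$; the Fa\`a di Bruno bookkeeping for these chain-rule terms is where the bulk of the work lies. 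Your outline would need to be rebuilt around this isotopy-to-Hamiltonian step rather than the direct $\beta'(x_d)\Theta(x,y)$ ansatz.
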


We now use the above proposition (to be proved in section~\ref{sect proof prop} below) to complete the proof of Theorem~\ref{thm suspension}.
Consider
$$
\widetilde H=
\begin{cases}
H,& \text{on }M\setminus W\\
H+(\widetilde H_0-H_0)\circ\phi,& \text{otherwise}.
\end{cases}
$$
Therefore, combining the estimates above and assuming that $\widetilde f$ is $C^3$-close to $f$, one gets
$$
\|\widetilde H-H\|_{C^2}\leq c \|\widetilde f-f\|_{C^{1}}
$$
for some $c>0$.

%%%%%%%%%%%%%%%%%%%
\subsection{Proof of Proposition~\ref{thm pert}}\label{sect proof prop}

Since the group of smooth symplectomorphisms isotopic to the identity is path-connected, we can always find an isotopy $g_\alpha$, $\alpha\in[0,1]$, of symplectomorphisms from the identity to $g$. 
The corresponding non-autonomous vector field $X_\alpha=\dot g_\alpha\circ g_\alpha^{-1}$ is symplectic (for each $\alpha$), and in fact Hamiltonian since we are in a simply connected space.
The proof of Proposition~\ref{thm pert} relies on this well-known fact, but it also requires a control on the size of the derivatives of $(x,y,\alpha)\mapsto g_\alpha(x,y)$.
For this reason we need to construct $g_\alpha$ through a simple isotopy of generating functions, whose norms are easily estimated.
Later, by adding a flow direction coordinate ($\alpha=x_d$) and its symplectic conjugate (the ``energy'' $y_d$), we will extend our Hamiltonian to $\mathbb{R}^{2d}$.

For functions $F\colon D\to\Rr^m$, $D\subset \Rr^{2d}$, consider the $C^s$-norm, with $s\in\Nn_0=\Nn\cup\{0\}$,
$$
\|F\|_{C^s}=
\max_{i=1,\dots,m}
\max_{|\sigma|\leq s} \sup_{D}
\left|
\frac{\partial^{|\sigma|}F_i}
{\partial^{\sigma_1}x_1\dots \partial^{\sigma_{2d}}y_{d}}
\right|
$$
where $\sigma=(\sigma_1,\dots,\sigma_{2d})\in\Nn_0^{2d}$ and $|\sigma|=\sum_i\sigma_i$.
Moreover, $\langle\cdot,\cdot\rangle$ denotes the usual euclidean scalar product and we introduce the projections $\pi_1(x,y)=x$ and $\pi_2(x,y)=y$.

Let $V\in C^\infty(\Rr^{2d-2})$ such that 
$$
W(x',y)=\langle x' , y \rangle+V(x',y)
$$
is a generating function of $g$.
More specifically, writing $(x',y')=g(x,y)$, since $\det D_1x'\not=0$,
$$
x=\pde Wy(x',y)
\te{and}
y'=\pde W{x'}(x',y).
$$
Therefore,
$$
g(x,y)=(x,y)-\Jj\nabla V\circ G(x,y).
$$
where $G(x,y)=(\pi_1 g(x,y),y)$ and $\|\nabla V\|_{C^0}=\|g-\id\|_{C^0}$.
We assume that $g$ is sufficiently $C^1$-close to the identity, thus $G$ is a diffeomorphism.

\begin{lemma}\label{lemma bdd on nabla V}
For $r\geq 1$, there is $c_r>0$ such that
$$
\|\nabla V\|_{C^r}\leq c_r \max\{1,\|G^{-1}\|_{C^r}^r\} \, \|g-\id\|_{C^r}.
$$
\end{lemma}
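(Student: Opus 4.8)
The plan is to exploit the identity $g(x,y)=(x,y)-\Jj\nabla V\circ G(x,y)$, which gives $\Jj\nabla V\circ G=\id-g$, hence $\nabla V = -\Jj^{-1}(\id-g)\circ G^{-1} = \Jj(\id-g)\circ G^{-1}$ since $\Jj^{-1}=-\Jj$. Thus $\nabla V$ is, up to the fixed linear isomorphism $\Jj$ (which contributes only a dimensional constant to any $C^r$-norm), the composition of $g-\id$ with $G^{-1}$. The whole lemma is therefore a quantitative chain rule: I must bound $\|(g-\id)\circ G^{-1}\|_{C^r}$ in terms of $\|g-\id\|_{C^r}$ and $\|G^{-1}\|_{C^r}$.

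The key step is the Fa\`a di Bruno / Gronwall-type estimate for compositions: for $F,\Phi$ smooth with $\Phi$ a diffeomorphism, $\|F\circ\Phi\|_{C^r}\leq C_r\,\|F\|_{C^r}\,\max\{1,\|\Phi\|_{C^r}^r\}$, where $C_r$ depends only on $r$ and the dimension. This follows by induction on $r$: differentiating $F\circ\Phi$ produces sums of terms of the form $(\partial^{|\beta|}F\circ\Phi)\cdot\prod_j \partial^{\sigma_j}\Phi$ with $|\beta|\leq r$ and $\sum_j|\sigma_j|\leq r$, and each such product is estimated by $\|F\|_{C^r}$ times a product of at most $r$ factors each bounded by $\max\{1,\|\Phi\|_{C^r}\}$, hence by $\max\{1,\|\Phi\|_{C^r}^r\}$. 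Applying this with $F=\Jj(\id-g)$ — so $\|F\|_{C^r}=\|g-\id\|_{C^r}$ up to the constant from $\Jj$ — and $\Phi=G^{-1}$ yields exactly $\|\nabla V\|_{C^r}\leq c_r\max\{1,\|G^{-1}\|_{C^r}^r\}\,\|g-\id\|_{C^r}$.

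The only genuine subtlety is that the composition estimate in the form just stated requires $F$ to have a sup bound on its domain matching where $G^{-1}$ maps; here this is automatic because $g$ is compactly supported in $B_\rho$, so $g-\id$ and all its derivatives are globally bounded, and $G^{-1}$ maps into (a neighbourhood of) $B_\rho$. I should also record that $G=(\pi_1 g,\,\mathrm{id}_y)$ is a diffeomorphism once $g$ is $C^1$-close enough to the identity (already assumed in the text), so $G^{-1}$ is well-defined and the factor $\|G^{-1}\|_{C^r}$ makes sense. The main obstacle is simply writing the Fa\`a di Bruno bookkeeping cleanly; there is no conceptual difficulty, and in the write-up I would either cite a standard reference for the quantitative chain rule or dispatch the induction in a couple of lines.
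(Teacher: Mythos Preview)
Your proposal is correct and follows essentially the same route as the paper: write $\nabla V$ (up to the linear factor $\Jj$) as $(g-\id)\circ G^{-1}$ and then apply the Fa\`a di Bruno formula to bound the $C^r$-norm of a composition by $c_r\max\{1,\|G^{-1}\|_{C^r}^r\}\,\|g-\id\|_{C^r}$. Apart from an immaterial sign slip (from $\Jj\nabla V\circ G=\id-g$ one gets $\nabla V=\Jj^{-1}(\id-g)\circ G^{-1}=-\Jj(\id-g)\circ G^{-1}$), your argument is the paper's proof.
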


\begin{proof}
Write $\phi=g-\id$ and $\beta=G^{-1}$ so that $\phi\circ \beta=-\Jj\nabla V$.
Recall the Fa\`a di Bruno formula for the higher derivative chain rule:
\begin{equation}\label{faa di bruno formula}
D^r(\phi\circ\beta)= \sum \frac{r!}{k_1!\dots k_r!1!^{k_1}\dots r!^{k_r}}
D^{|k|}\phi(\beta)\,(\underbrace{D\beta,\dots,D\beta}_{k_1},\dots,\underbrace{D^r\beta,\dots,D^r\beta}_{k_r})
\end{equation}
where the sum is over every $k=(k_1,\dots,k_r)\in\Nn_0^r$ such that 
$$
\langle k,(1,2,\dots,r)\rangle=r.
$$
Therefore, there is a constant $c_r>0$ depending on $r$, satisfying
$$
\|\nabla V\|_{C^r} \leq
c_r \, \max\{1,\|\beta\|_{C^r}^r\} \, \|\phi\|_{C^r} ,
$$
where we have used that $\|\beta\|_{C^{k_i}}^{k_i}\leq \|\beta\|_{C^r}^{k_i} \leq \max\{1,\|\beta\|_{C^r}^r\}$.
\end{proof}

Let $\ell\in C^\infty(\Rr)$ be a bump function verifying
$$
\ell(\alpha)=
\begin{cases}
1,& \alpha\geq \xi \\
0,& \alpha\leq 0
\end{cases}
$$ 
for some choice of $0<\xi<1$ such that $\ell'>0$ in $(0,\xi)$.
We can now construct the following smooth 1-family of generating functions:
$$
W_\alpha(x',y)=\langle x' , y \rangle+\ell(\alpha)\,V(x',y).
$$
For each $\alpha\in\Rr$ we obtain a $C^\infty$-symplectomorphism $g_\alpha$ generated by $W_\alpha$. 
Clearly, $g_0=\id$ and $g_1=g$.
Hence, $g_\alpha$ is a $C^\infty$-isotopy between $\id$ and $g$ implicitly given by
$$
g_\alpha= \id - \ell(\alpha) \, \Jj\nabla V\circ G_\alpha,
$$
where $G_\alpha=(\pi_1 g_\alpha,\pi_2)$ and $\|g_\alpha-\id\|_{C^0}\leq \|\nabla V\|_{C^0} =\|g-\id\|_{C^0}$.

\begin{lemma}\label{lemma bdd on nabla V 2}
For $r\geq1$, there is $c_r>0$ such that for any $\alpha\in\Rr$, if $\|g-\id\|_{C^1}$ is sufficiently small, then
$$
\|g_\alpha-\id\|_{C^r}\leq \frac{c_r}{1-\|\nabla V\|_{C^1}}  \, \|g-\id\|_{C^{r-1}}^r\|\nabla V\|_{C^r}.
$$
\end{lemma}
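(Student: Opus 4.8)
The plan is to argue by induction on $r$, differentiating the defining relation $g_\alpha=\id-\ell(\alpha)\,\Jj\nabla V\circ G_\alpha$ and isolating the top-order term. Since $|\ell(\alpha)|\le1$ and $\Jj$ is an isometry, the problem reduces to bounding $\|\nabla V\circ G_\alpha\|_{C^r}$. The structural fact I would use repeatedly is that $G_\alpha=(\pi_1g_\alpha,\pi_2)$ differs from the identity only in its first block, so $\|G_\alpha-\id\|_{C^s}\le\|g_\alpha-\id\|_{C^s}$ for every $s$; in particular $\|D^jG_\alpha\|_{C^0}\le\|g_\alpha-\id\|_{C^j}$ when $j\ge2$, whereas $\|DG_\alpha\|_{C^0}\le1+\|g_\alpha-\id\|_{C^1}$ stays bounded once $\|g-\id\|_{C^1}$ — hence, by Lemma~\ref{lemma bdd on nabla V}, $\|\nabla V\|_{C^1}$ together with all intermediate norms — is small. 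The same smallness, via the implicit function theorem applied to the generating functions $W_\alpha$, guarantees that $G_\alpha$ is a genuine diffeomorphism and that $\|\nabla V\|_{C^1}<1$, facts already invoked above.

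For $r=1$ I would differentiate the relation once, getting $D(g_\alpha-\id)=-\ell(\alpha)\,\Jj\,D^2V(G_\alpha)\,DG_\alpha$ with $DG_\alpha=I+D(\pi_1(g_\alpha-\id),0)$; moving the summand that contains $D(g_\alpha-\id)$ to the left and using $\|D^2V(G_\alpha)\|_{C^0}\le\|\nabla V\|_{C^1}<1$ gives $\|g_\alpha-\id\|_{C^1}\le\|\nabla V\|_{C^1}/(1-\|\nabla V\|_{C^1})$. For the inductive step I would apply the Fa\`a di Bruno formula \eqref{faa di bruno formula} to $D^r(\nabla V\circ G_\alpha)$. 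Among its summands, exactly the one indexed by $k=(0,\dots,0,1)$ carries a derivative of $G_\alpha$ of order $r$: it equals $D\nabla V(G_\alpha)\cdot D^rG_\alpha$, with $\|D\nabla V(G_\alpha)\|_{C^0}\le\|\nabla V\|_{C^1}$ and $\|D^rG_\alpha\|_{C^0}\le\|g_\alpha-\id\|_{C^r}$; transposing it to the left-hand side produces the factor $(1-\|\nabla V\|_{C^1})^{-1}$. Every remaining summand contains a factor $D^{|k|}\nabla V(G_\alpha)$ with $1\le|k|\le r$, hence of $C^0$-norm at most $\|\nabla V\|_{C^r}$, multiplied by a product of derivatives $D^jG_\alpha$ with $1\le j\le r-1$ only; the $j=1$ factors are bounded by a constant and the $j\ge2$ factors by $\|g_\alpha-\id\|_{C^{r-1}}$, which the inductive hypothesis, combined with Lemma~\ref{lemma bdd on nabla V}, bounds in turn by a constant multiple of $\|g-\id\|_{C^{r-1}}$. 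Collecting the finitely many summands and absorbing the combinatorial constants of \eqref{faa di bruno formula} into $c_r$ then yields the asserted bound.

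The only real obstacle is the implicitness of the relation: $g_\alpha$ appears on both sides through $G_\alpha$, so a naive chain-rule estimate of $\|\nabla V\circ G_\alpha\|_{C^r}$ is circular. The resolution is exactly the transposition step above, and for it to be legitimate one must verify, using the constraint $\langle k,(1,\dots,r)\rangle=r$, that the summand $k=(0,\dots,0,1)$ is the \emph{only} Fa\`a di Bruno term hiding a derivative of $G_\alpha$ of order $r$; this is also what produces the claimed denominator $1-\|\nabla V\|_{C^1}$. The remaining work — keeping track of which lower-order norms of $g_\alpha$ enter each summand and feeding in the inductive hypothesis — is routine bookkeeping.
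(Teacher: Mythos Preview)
Your proposal is correct and follows essentially the same route as the paper: apply Fa\`a di Bruno to $g_\alpha-\id=v_\alpha\circ G_\alpha$ with $v_\alpha=-\ell(\alpha)\Jj\nabla V$, single out the term $k=(0,\dots,0,1)$ carrying $D^rG_\alpha$, transpose it to the left to produce the factor $(1-\|\nabla V\|_{C^1})^{-1}$, and bound the remaining summands using only derivatives of $G_\alpha$ of order $\le r-1$. The paper writes this in one line as
\[
\|g_\alpha-\id\|_{C^r}\le c_r\|v_\alpha\|_{C^r}\|g_\alpha-\id\|_{C^{r-1}}^r+\|v_\alpha\|_{C^1}\|g_\alpha-\id\|_{C^r},
\]
and then concludes by invoking Lemma~\ref{lemma bdd on nabla V}; your version makes the induction on $r$ (needed to replace $\|g_\alpha-\id\|_{C^{r-1}}$ by $\|g-\id\|_{C^{r-1}}$) explicit, which is the only substantive difference.
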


\begin{proof}
Write $v_\alpha=-\ell(\alpha)\,\Jj\nabla V$ so that $\|v_\alpha\|_{C^r}\leq\|\nabla V\|_{C^r}$.
Using again the Fa\`a di Bruno formula,
\begin{equation*}
\begin{split}
D^r(g_\alpha-\id) & =\sum_{k_r=0} c_{k,r}
D^{|k|}v_\alpha(G_\alpha)\,
(\underbrace{DG_\alpha,\dots,DG_\alpha}_{k_1},\dots,\underbrace{D^{r-1}G_\alpha,\dots,D^{r-1}G_\alpha}_{k_{r-1}}) \\
& +Dv_\alpha(G_\alpha)\,D^rG_\alpha,
\end{split}
\end{equation*}
where $c_{k,r}$ are the coefficients as in~\eqref{faa di bruno formula} and we have split the sum in the terms corresponding to the vectors $k=(k_1,\dots,k_{r-1},0)$ and $k=(0,\dots,0,1)$.
Taking the norms, with $c_r>0$ depending on $r$,
$$
\|g_\alpha-\id\|_{C^r}\leq c_r \|v_\alpha\|_{C^{r}}\|g_\alpha-\id\|_{C^{r-1}}^{r}+\|v_\alpha\|_{C^1}\,\|g_\alpha-\id\|_{C^r}.
$$
Therefore,
$$
\|g_\alpha-\id\|_{C^r}\leq \frac{c_r}{1-\|v_\alpha\|_{C^1}}
\|g_\alpha-\id\|_{C^{r-1}}^{r}\|v_\alpha\|_{C^{r}}.
$$
The claim follows from applying Lemma~\ref{lemma bdd on nabla V}.
\end{proof}

Consider now the $C^\infty$-vector field $\dot g_\alpha=\frac{d}{d\alpha}g_\alpha$ on $\Rr^{2d-2}$ that generates the isotopy $g_\alpha$.
The non-autonomous vector field 
$$
X_\alpha=\dot g_\alpha\circ g_\alpha^{-1}
$$ 
is symplectic, i.e. $\iota_{X_\alpha}\omega_0$ is a closed 1-form. 
By the Poincar\'e lemma, since our space is simply-connected, it is also exact.
Therefore, for each $\alpha$ there exists a $C^\infty$-function $K_\alpha\colon\Rr^{2d-2}\to\Rr$ with compact support such that $\iota_{X_\alpha}\omega_0=d K_\alpha$,
i.e. $\nabla K_\alpha=-\Jj X_\alpha$ and using the notation of a Hamiltonian vector field 
$$
X_{K_\alpha}=X_\alpha.
$$
Up to a constant (chosen so that $K_\alpha$ has compact support), it is given by
\begin{equation}\label{def K}
 K_{\alpha}(x,y)=\int_{[0,(x,y)]}\iota_{X_\alpha}\omega_0=\int_0^1 \left\langle X_{K_\alpha}(s(x,y)),(y,-x)\right\rangle \, ds,
\end{equation}
where the integration is along the straight path $[0,(x,y)]$ that connects $(x,y)$ to the origin.
Notice that the vector field that determines $g$ as the time-1 map is non-autonomous, not preserving the ``energy'' $K$.
Also, $K_\alpha=0$ for any $\alpha\not\in(0,1)$.

We can extend the dimension of the space to $\Rr^{2d}$ by considering the variables $x_d=\alpha$ (seen as the time direction) and $y_d$ (the ``energy'' $K$).

Let $\widetilde\ell\in C^\infty(\Rr)$ be another bump function satisfying
$$
\widetilde \ell(y_d)=
\begin{cases}
1,& |y_d|\leq \nu\rho\\
0,& |y_d|\geq \rho
\end{cases}
$$ 
for any choice of $0<\nu<1$, such that $\|\widetilde\ell\|_{C^0}\leq1$, 
$$
\|\widetilde\ell'\|_{C^0}\leq\frac2{(1-\nu)\rho}
\te{and}
\|\widetilde\ell''\|_{C^0}\leq\frac4{(1-\nu)\rho^2}.
$$
We define the (autonomous) $C^\infty$-Hamiltonian $\widetilde H_0\colon \Rr^{2d}\to\Rr$ as
$$
\widetilde H_0(x,x_d,y,y_d)=H_0(y_d)+K_{x_d}(x,y) \,\widetilde\ell(y_d)
$$
with $H_0(y_d)=y_d$. 
Hence,
\begin{equation}\label{def grad H}
\nabla(\widetilde H_0-H_0)=\left(\widetilde\ell\,\pde Kx,\widetilde\ell\,\pde K{x_d},\widetilde\ell\,\pde Ky,\widetilde\ell' \,K
\right).
\end{equation}
Notice that outside $\{x_d\in(0,1), |y_d|<\rho\}\subset\Rr^{2d}$ we have $\widetilde H_0=H_0$. By contrast, the Hamiltonian vector field for $x_d\in[0,1]$ and $|y_d|\leq\nu\rho$ is
$$
X_{\widetilde H_0}=\left(\pi_1 X_K,1, \pi_2 X_K,-\pde{K}{x_d}\right).
$$

\begin{lemma}\label{lemma est vf}
There is $\delta>0$ and $c>0$ such that, if $\|g-\id\|_{C^1}\leq\delta$, then~\eqref{bound on Ham0} holds.
\end{lemma}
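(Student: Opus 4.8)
The plan is to estimate $\|\widetilde H_0 - H_0\|_{C^2}$ term by term using the explicit formula~\eqref{def grad H} for the gradient, together with the chain of estimates already set up: Lemma~\ref{lemma bdd on nabla V} controls $\|\nabla V\|_{C^r}$ by $\|g-\id\|_{C^r}$ (up to the factor $\max\{1,\|G^{-1}\|_{C^r}^r\}$), Lemma~\ref{lemma bdd on nabla V 2} then controls $\|g_\alpha-\id\|_{C^r}$ uniformly in $\alpha$, and finally the defining integral~\eqref{def K} for $K_\alpha$ converts bounds on $X_\alpha = \dot g_\alpha \circ g_\alpha^{-1}$ into bounds on $K_\alpha$ and its derivatives. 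So the first step is to record that, for $\|g-\id\|_{C^1}\leq\delta$ small, all the quantities $\|G_\alpha^{-1}\|_{C^r}$ (equivalently $\|g_\alpha^{-1}\|_{C^r}$) are bounded by a constant depending only on $r$ and $H$, by the inverse function theorem applied uniformly in $\alpha$; this removes the awkward $\max\{1,\|G^{-1}\|_{C^r}^r\}$ factors and lets me simply say $\|\nabla V\|_{C^r}\leq c_r\|g-\id\|_{C^r}$ and $\|g_\alpha - \id\|_{C^r}\leq c_r\|g-\id\|_{C^r}$.

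Next I would bound $K_\alpha$. Differentiating $g_\alpha = \id - \ell(\alpha)\,\Jj\nabla V\circ G_\alpha$ in $\alpha$ gives $\dot g_\alpha = -\ell'(\alpha)\,\Jj\nabla V\circ G_\alpha - \ell(\alpha)\, D(\Jj\nabla V)(G_\alpha)\,\dot G_\alpha$, so $\|\dot g_\alpha\|_{C^r}\leq c_r\|\nabla V\|_{C^{r+1}}\leq c_r\|g-\id\|_{C^{r+1}}$ (the $C^0$-size being simply $\leq c\|g-\id\|_{C^1}$ because of the single $\nabla V$ factor, crucially with no $\nabla V^2$ yet); composing with $g_\alpha^{-1}$ keeps the same order of magnitude, so $\|X_\alpha\|_{C^r}\leq c_r\|g-\id\|_{C^{r+1}}$ while $\|X_\alpha\|_{C^0}\leq c\|g-\id\|_{C^1}$. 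Feeding this into~\eqref{def K}: since $K_\alpha(x,y)=\int_0^1\langle X_\alpha(s(x,y)),(y,-x)\rangle\,ds$ and the domain is $B_\rho$, the factor $(y,-x)$ contributes a $\rho$, so $\|K_\alpha\|_{C^0}\leq c\,\rho\,\|X_\alpha\|_{C^0}\leq c\rho\|g-\id\|_{C^1}$; differentiating under the integral, each derivative falling on $X_\alpha(s(x,y))$ produces a factor $s\leq1$ and each falling on $(y,-x)$ is harmless, so $\|K_\alpha\|_{C^1}\leq c\,(1+\rho)\,\|X_\alpha\|_{C^1}$ and $\|K_\alpha\|_{C^2}\leq c\,(1+\rho)\,\|X_\alpha\|_{C^2}$. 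Since $K_\alpha$ also depends on $x_d=\alpha$ through $\ell(\alpha)$ and $\ell'(\alpha)$, the $\partial/\partial x_d$ derivatives are controlled the same way; here is where a quadratic term in $\|\nabla V\|$ first appears, because $\partial K/\partial x_d$ involves $\dot g_\alpha$, whose $C^1$-norm when differentiated once more in $x$ couples two factors that are each $O(\|g-\id\|)$ — this is the source of the $\rho\|g-\id\|_{C^3}^2$ correction in~\eqref{bound on Ham0}.

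Finally I assemble~\eqref{bound on Ham0} from~\eqref{def grad H}: the first three components of $\nabla(\widetilde H_0 - H_0)$ are $\widetilde\ell$ times first-order partials of $K$, bounded by $\|K\|_{C^3}$ (one derivative for $\nabla$, one more for the $C^2$-norm of $\widetilde H_0 - H_0$, and $\widetilde\ell$ is $y_d$-dependent so its own derivatives $\|\widetilde\ell'\|_{C^0}\leq 2/((1-\nu)\rho)$, $\|\widetilde\ell''\|_{C^0}\leq 4/((1-\nu)\rho^2)$ enter), while the last component is $\widetilde\ell'\,K$, so taking one further derivative may land on $\widetilde\ell'$ and produce $\widetilde\ell''\,K$, contributing a term of size $\rho^{-2}\cdot\rho\|g-\id\|_{C^1}=\rho^{-1}\|g-\id\|_{C^1}$ — this is the origin of the $\rho^{-1}$ in~\eqref{bound on Ham0}. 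Collecting: the dominant contributions are $(1+\rho)\|K\|_{C^2\text{-ish}}\sim(1+\rho)\|g-\id\|_{C^1}$ from the ``good'' terms, $\rho^{-1}\|g-\id\|_{C^1}$ from the $\widetilde\ell''$ term, and $\rho\|g-\id\|_{C^3}^2\cdot(\text{something})\cdot\|g-\id\|_{C^1}/\|g-\id\|_{C^1}$ after factoring — more precisely one writes everything as $\big(\text{polynomial in }\rho,\rho^{-1}\big)\cdot\|g-\id\|_{C^1}$ plus the quadratic remainder, giving exactly $c(1+\rho+\rho^{-1}+\rho\|g-\id\|_{C^3}^2)\|g-\id\|_{C^1}$. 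The main obstacle is bookkeeping the quadratic term honestly: one must check that the Faà di Bruno expansion of the higher $x_d$-derivatives of $K_\alpha$ never produces a factor of $\|g-\id\|^2$ multiplying anything worse than $\|g-\id\|_{C^3}^0$, so that the whole nonlinearity is captured by the single summand $\rho\|g-\id\|_{C^3}^2$; I would handle this by tracking separately the terms that are linear in $\nabla V$ (which give the $1+\rho+\rho^{-1}$ part) and those that are at least quadratic (which are all absorbed into $\rho\|g-\id\|_{C^3}^2\,\|g-\id\|_{C^1}$, using smallness of $\delta$ to discard cubic and higher contributions).
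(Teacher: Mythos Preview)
Your plan is essentially the paper's own approach: expand $\nabla(\widetilde H_0-H_0)$ via~\eqref{def grad H}, list the second-order partials, and estimate each through the chain $\dot g_\alpha,\ddot g_\alpha,D\dot g_\alpha \to X_K=\dot g_\alpha\circ g_\alpha^{-1} \to K_\alpha$ via~\eqref{def K}, with Fa\`a di Bruno bookkeeping. The paper does exactly this, writing out explicitly the six types of second partials and the bootstrap inequalities for $\|\dot g\|_{C^0}$, $\|\ddot g\|_{C^0}$, $\|D\dot g\|_{C^0}$, then $\|\dddot g\|_{C^0}$, $\|D^2\dot g\|_{C^0}$, $\|D\ddot g\|_{C^0}$.

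One correction worth flagging: you locate the quadratic correction $\rho\|g-\id\|_{C^3}^2$ in the mixed derivative $\partial_{z_i}\partial_{x_d}K$, but in fact that term is still linear (it is controlled by $\|X_K\|_{C^1}\leq c\|g-\id\|_{C^1}$, since $\partial_{z_i}\dot K=\dot{(\partial_{z_i}K)}$ is a first derivative of $X_K$). The genuinely quadratic contribution sits in the pure second $x_d$-derivative $\ddot K$, because $\|\ddot K\|_{C^0}\leq\rho\|\ddot X_K\|_{C^0}$ and $\ddot X_K=\dddot g\circ g^{-1}+2D\ddot g\circ g^{-1}\,\dot g^{-1}+D^2\dot g\circ g^{-1}(\dot g^{-1},\dot g^{-1})+D\dot g\circ g^{-1}\,\ddot g^{-1}$ contains products of two small factors, yielding $\|\ddot X_K\|_{C^0}\leq c(1+\|g-\id\|_{C^3}^2)\|g-\id\|_{C^1}$. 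Your final paragraph's strategy of separating terms linear versus quadratic in $\nabla V$ would still catch this once you compute $\ddot K$; just be aware that this is where the work is, not in the mixed partials.
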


\begin{proof}
 We write a dot to represent the derivative with respect to $x_d$ and $D$ for the derivative with respect to $(x,y)$.
Recall that $X_K(x,x_d,y,y_d)=\dot g_{x_d}\circ g_{x_d}^{-1}(x,y)$.
We will use Lemmas~\ref{lemma bdd on nabla V} and~\ref{lemma bdd on nabla V 2} without explicit mention.

From~\eqref{def grad H} we have
$$
\norm{\widetilde H_0-H_0}1\leq 
\max\left\{ \norm K0,\norm{X_K}0,\|\dot K\|_{C^0}, \|\widetilde\ell'\|_{C^0}\norm K0\right\}.
$$
Now, the second order derivatives of $\widetilde H_0$ are
\begin{equation*}
\begin{split}
\frac{\partial^2\widetilde H_0}{\partial z_i\partial z_j}&=\widetilde\ell\, \frac{\partial^2K}{\partial z_i\partial z_j} \\
\frac{\partial^2\widetilde H_0}{\partial z_i\partial x_d}&=\widetilde\ell \,\frac{\partial\dot K}{\partial z_i}\\
\frac{\partial^2\widetilde H_0}{\partial^2 x_d}&=\widetilde\ell \,\ddot K \\
\frac{\partial^2\widetilde H_0}{\partial z_i\partial y_d}&=\widetilde\ell' \,\frac{\partial K}{\partial z_i}\\
\frac{\partial^2\widetilde H_0}{\partial x_d \partial y_d}&=\widetilde\ell' \,\dot K \\
\frac{\partial^2\widetilde H_0}{\partial^2 y_d}&=\widetilde\ell'' \, K
\end{split}
\end{equation*}
where $z=(x,y)$ and $i,j=1,\dots,2d-2$.
So,
\begin{equation*}
\begin{split}
\norm{\widetilde H_0-H_0}2\leq 
\max\left\{ \right.
&
\norm{X_K}1,\|\widetilde\ell'\|_{C^0}\norm{X_K}0,\|\ddot K\|_{C^0}, \\
& 
\max\{1,\|\widetilde\ell'\|_{C^0}\}\|\dot K\|_{C^0}, \\
& \left.
\max\{1,\|\widetilde\ell'\|_{C^0},\|\widetilde\ell''\|_{C^0}\}\norm K0
\right\}.
\end{split}
\end{equation*}

By writing $v=-\Jj\nabla V$, we have that 
$$
\|\dot g\|_{C^0}\leq 
\norm\ell1\,\|v\|_{C^0}+\|v\|_{C^1} \|\dot g\|_{C^0}.
$$ 
Therefore,
$$
\|\dot g\|_{C^0}\leq
\frac{\norm\ell1 \|g-\id\|_{C^0}}{1-\norm v1}
\leq
c\norm{g-\id}0
$$
for some $c>0$.
Similarly,
\begin{equation*}
\begin{split}
\|\ddot g\|_{C^0} 
&\leq 
\frac{\norm\ell2 \norm v0+2\norm\ell1 \norm v1 \|\dot g\|_{C^0} + \norm v2 \|\dot g\|_{C^0}^2  }
{1-\norm v1} \\
&\leq 
c\norm{g-\id}0
\end{split}
\end{equation*}
for some $c>0$.
Moreover,
$$
\|D\dot g\|_{C^0} \leq 
\norm\ell1\,\norm v1\norm{g}1 + \|v\|_{C^2} \norm{g}1 \|\dot g\|_{C^0} + \norm{v}1\|D\dot g\|_{C^0},
$$
thus
\begin{equation*}
\begin{split}
\|D\dot g\|_{C^0} 
&\leq 
\frac{\norm\ell1\,\norm v1\norm{g}1 + \|v\|_{C^2}  \|\dot g\|_{C^0} \norm{g}1}
{1-\norm v1} \\
&\leq 
c\norm{g-\id}1
\end{split}
\end{equation*}
for some $c>0$.

From $\dot X_K=\ddot g\circ g^{-1}+D\dot g\circ g^{-1}\, \dot g^{-1}$ and $DX_K=D\dot g\circ g^{-1}\,Dg^{-1}$,  
$$
\norm{X_K}1\leq c\norm{g-\id}1.
$$
From~\eqref{def K}, $\norm K0\leq\rho\norm{X_K}0$, $\|\dot K\|_{C^0}\leq\rho\|X_K\|_{C^1}$ and also $\|\ddot K\|_{C^0}\leq\rho\|\ddot X_K\|_{C^0}$.
Thus, it remains to bound $\|\ddot X_K\|_{C^0}$.

As before, we obtain the following bounds:
\begin{equation*}
\begin{split}
\|\dddot g\|_{C^0}\leq
\frac1{1-\norm v1}
\left( \right.&
\norm\ell3 \norm v0 +3\norm\ell2\norm v1\|\dot g\|_{C^0} \\
& 
+3\norm\ell1\norm v2\|\dot g\|_{C^0}^2 \\
& \left.
+3\norm\ell1\norm v1\|\ddot g\|_{C^0} +
\norm v3\|\dot g\|_{C^0}^3
\right) \\
\|D^2\dot g\|_{C^0} \leq
\frac1{1-\norm v1}
\left( \right.&
\norm\ell1 \norm v1 \|g\|_{C^1}^2+\norm\ell1\norm v1\|D^2g\|_{C^0} \\
& 
+\norm v3\|g\|_{C^1}^2\|\dot g\|_{C^0} 
+\norm v2\|D^2g\|_{C^0}\|\dot g\|_{C^0} \\
& \left. 
+2\norm v2\norm g1 \|\dot g\|_{C^1}
\right) \\
\|D\ddot g\|_{C^0}\leq
\frac1{1-\norm v1}
\left( \right.&
\norm\ell2 \norm v1 {\norm g1}+2\norm\ell1\norm v2\norm g1\|\dot g\|_{C^0} \\
& 
+2\norm\ell1\norm v1\|\dot g\|_{C^1} +\norm v3\|g\|_{C^1}\|\dot g\|_{C^0}^2 \\
& \left.
+2\norm v2 \|\dot g\|_{C^1}\|\dot g\|_{C^0}
+\norm v2\norm g1\|\ddot g\|_{C^0}
\right)
\end{split}
\end{equation*}

Finally, we use the fact that $\ddot X_K=\dddot g\circ g^{-1}+2D\ddot g\circ g^{-1}\,\dot g^{-1} +D^2\dot g\circ g^{-1}\,(\dot g^{-1},\dot g^{-1}) + D\dot g\circ g^{-1}\,\ddot g^{-1}$.
So,
$$
\|\ddot X_K\|_{C^0}\leq c\, \left(1+\|g-\id\|_{C^3}^2\right)\norm{g-\id}1
$$
for some constant $c>0$.
Evaluating all the above estimates together, one gets
$$
\norm{\widetilde H_0-H_0}2\leq c\,\left(1+\rho+\rho^{-1}+\rho\|g-\id\|_{C^3}^2
\right) \norm{g-\id}1
$$
for some universal constant $c>0$ that only depends on the norms of the bump functions.
\end{proof}

\begin{remark}
In the above lemma there is the need to bound the size of higher derivatives of $g$.
This loss of differentiability is caused by our specific construction of the isotopy $g_\alpha$. 
It should be possible to use a different isotopy that avoids this phenomenon.
Our choice was done for the sake of simplicity, since it does not restrict our main results.
\end{remark}

The Hamiltonian flow for $x_d\in[0,1]$ and $|y_d|\leq\nu\rho$ is given by
\begin{equation*}
\begin{split}
\varphi_{\widetilde H_0}^t(x,x_d,y,y_d)
=& \left(\pi_1g_{x_d+t}\circ g_{x_d}^{-1}(x,y), \right.\\
& x_d+t,\\
& \pi_2g_{x_d+t}\circ g_{x_d}^{-1}(x,y), \\
& \left. y_d-\int_0^t\pde {K_{x_d+s}}{x_d}\circ g_{x_d+t}\circ g_{x_d}^{-1}(x,y) \,ds\right).
\end{split}
\end{equation*}
Using estimates in the proof of Lemma~\ref{lemma est vf}, one gets that the increment in the last coordinate for $t\in[0,1]$ is bounded from above by
$$
\norm{ \pde{K}{x_d} }0
\leq \rho \|X_K\|_{C^0}
\leq \nu\rho
$$
as long as $\norm{g-\id}1$ is small.
Finally, the time-1 flow acts on the transversal $\{(x,0,y,0)\}$ by
\begin{equation*}
\varphi_{\widetilde H_0}^1(x,0,y,0)
= \left(\pi_1g(x,y),  1, \pi_2g(x,y), 
-\int_0^1\pde {K_{s}}{x_d}\circ g(x,y)\,ds \right).
\end{equation*}
In particular, if $g(0)=(0)$, $\varphi_{\widetilde H_0}^1(0)=(0,1,0,0)$ because $\pde{}{x_d}K(0,0)=0$.

%%%%%%%%%%%%%%%%%%%%%%%%%%%%%%%%%%%%%%%%%%
\section{Proof of Theorems~\ref{New} and~\ref{New2}}\label{end}

The proof of Theorem~\ref{New2} follows from the following steps:
\begin{enumerate}
\item 
Since elliptic closed orbits are stable, we can find a $C^\infty$ approximation $\widetilde H$ keeping the same (i.e. its analytic continuation) elliptic closed orbit.
\item 
Consider the $C^\infty$ Poincar\'e map $f$ of $\varphi_{\widetilde H}^t$ on a transversal to the elliptic closed orbit restricted to an energy surface.
\item
Use Theorem~\ref{GT} to obtain a $C^\infty$-symplectomorphim $\widetilde f$ close to $f$ with a homoclinic tangency.
\item 
Finally, Theorem~\ref{thm suspension} allows us to construct a Hamiltonian $C^2$-close to $\widetilde H$, which realizes the Poincar\'e map $\widetilde f$ on the energy surface.
\end{enumerate}

Assume that the energy level $H^{-1}(\{H(p)\})$ is far from Anosov.
The proof of Theorem~\ref{New} follows from Theorem~\ref{New2} after applying Theorem~\ref{BD} that gives elliptic closed orbits for some Hamiltonian $C^2$-close.

\medskip

Finally, we would like to mention a possible alternative strategy to prove Theorem~\ref{New2} in the absence of Theorem~\ref{GT}.
We first observe that an area-preserving diffeomorphism yielding an irrational invariant curve can be perturbed in order to create homoclinic tangencies, as proved in~\cite{MR}.
So, starting from a Hamiltonian with an elliptic closed orbit, one can perturb its tangent map and get a new Hamiltonian (using a version of Franks Lemma\cite{V}) whose Poincar\'e map is an area-preserving map satisfying a twist condition along a diophantine invariant curve.
KAM theory then assures us the stability of this structure, and a suspension of the result in~\cite{MR} holds homoclinic tangencies for a nearby Hamiltonian.

%%%%%%%%%%%%%%%%%%%%%%%%%%%%%%%%%%%%%%%%%
\section*{Acknowledgements}

MB was partially supported by Funda\c c\~ao para a Ci\^encia e a Tecnologia through SFRH/BPD/20890/2004 and the project PTDC/MAT/099493/2008. JLD was partially supported by Funda\c c\~ao para a Ci\^encia e a Tecnologia through the project ``Randomness in Deterministic Dynamical Systems and Applications'' PTDC/MAT/105448/2008.

%%%%%%%%%%%%%%%%%%%%%%%%%%%%%%%%%%%%%%%%%

\end{document}